\newcommand{\beqa}{\begin{eqnarray*}}
\newcommand{\eeqa}{\end{eqnarray*}}
\newcommand{\beqn}{\begin{eqnarray}}
\newcommand{\eeqn}{\end{eqnarray}}
\newcommand{\D}{\mathbb D}
\newcommand{\R}{\mathbb R}
\newcommand{\N}{\mathbb N}
\newcounter{cnt1}
\newcounter{cnt2}
\newcounter{cnt3}
\newcommand{\blr}{\begin{list}{$($\roman{cnt1}$)$}
 {\usecounter{cnt1} \setlength{\topsep}{0pt}
 \setlength{\itemsep}{0pt}}}
\newcommand{\bla}{\begin{list}{$($\alph{cnt2}$)$}
 {\usecounter{cnt2} \setlength{\topsep}{0pt}
 \setlength{\itemsep}{0pt}}}
\newcommand{\bln}{\begin{list}{$($\arabic{cnt3}$)$}
 {\usecounter{cnt3} \setlength{\topsep}{0pt}
 \setlength{\itemsep}{0pt}}}
\newcommand{\el}{\end{list}}
\newtheorem{thm}{Theorem}[section]
\newtheorem{lem}[thm]{Lemma}
\newtheorem{cor}[thm]{Corollary}
\newtheorem{Def}[thm]{Definition}
\newtheorem{prop}[thm]{Proposition}
\newtheorem{rem}[thm]{Remark}
\newcommand{\Rem}{\begin{rem} \rm}
\newcommand{\bdfn}{\begin{Def} \rm}
\newcommand{\edfn}{\end{Def}}
\newcommand{\ba}{\begin{array}}
\newcommand{\ea}{\end{array}}
\begin{document}

\begin{center}\large{{\bf{  Non-absolute integrable function spaces on metric measure spaces    }}} 
		\vspace{0.05cm}
		
	  Parthapratim Saha$^{a},$  Bipan Hazarika$^{b},$ Hemanta Kalita$^{c}$	  
		
				\vspace{0.05cm}
				\vspace{0.5cm}
				$^a$ Department of Mathematics, Sipajhar College, Darrang, Assam, India\\
				Email:   parthasaha.sipclg@gmail.com, parthapratimsaha@sipajharcollege.ac.in \\
$^b$ Department of Mathematics, Gauhati University, Guwahati, Assam, India\\
Email:  bh\_gu@gauhati.ac.in \\
$^c$ Mathematics Division, VIT Bhopal University, Kothri-kalan, Sehore, Bhopal-Indore Highway, India\\
				Email: hemanta30kalita@gmail.com
	
\end{center}
	\title{}
	\author{}
	\begin{abstract} 
Kuelbs-Steadman spaces are introduced in this article on a separable metric space having finite diameter together with a finite Borel measure. Kuelbs-Steadman spaces of the Lipschitz type are also discussed. Various inclusion properties are also discussed.  In the sequel, we introduce HK-Sobolev spaces on metric mesure space which coincides with HK-Sobolev space in the Euclidean case.  In application, we discuss the boundedness of Hardy-Littlewood maximal operator on Kuelbs-Steadman spaces and HK-Sobolev spaces over a metric measure space.    \\
		
		\noindent{\footnotesize {\bf{Keywords and phrases:}}}  Kuelbs-Steadman spaces; Lipschitz type Kuelbs-Steadman spaces; HK-Sobolev spaces; Poincar\'e type inequality; Hardy-Littlewood maximal operator
		 \\
		\textbf {2020 Mathematical Subject Clasification}:  46B25, 46E35, 46E36, 46F25.
	\end{abstract}
	\maketitle
	
	
	\pagestyle{myheadings}
	\markboth{\rightline {\scriptsize  P. Saha, B. Hazarika, H. Kalita}}
	{\leftline{\scriptsize  Non-absolute integrable function spaces on metric measure spaces  }}
	
	\maketitle 


 \section{Introduction}
 Many recent research have focused on Kuelbs-Steadman spaces (see, for example, the references in \cite{GILLSURVEY, Gill}, and \cite{HK, KHM}).
The concept behind studying these spaces is to think of the $L^1$ spaces as containing the Henstock-Kurzweil integrable functions in a sense within a bigger Hilbert space with a smaller norm.
This makes it possible to apply a wide range of mathematical concepts to functional analysis and other areas of study, including  quantum physics, Fourier transforms, convolution operators, Feynman integrals,  differential equations, Markov chains \cite{GILLSURVEY, Gill, KHM}, Gaussian measures (see also \cite{KUELBS}).

This method also enables the development of a functional analysis theory that connects Kuelbs-Steadman spaces with Sobolev-type spaces rather than with traditional $L^p$ spaces. In $20^{th}$ century, finding the solution to the Dirichlet and Neuman problems for the Laplace equation was one of the most significant mathematical physics problems (\cite{WM}).
Famous scientists at that time, including Hilbert, Courant, Weyl, and many others, were fascinated by this subject. In 1930, the major challenge of this problem was resolved by  S. Sobolev, who proposed a functional space known as the Sobolev space, which is defined by maps in $\textit{L}^p(\mathbb{R}^n)$ whose distributional derivatives of order up to $\mathtt{k}$ exist and  contained in $\textit{L}^p(\mathbb{R}^n).$ (readers can see \cite{Sobolev}).  Among the  essential tools of functional analysis, Sobolev spaces are one of them. They are used in an assortment of approaches to solve ordinary or partial differential equations or difference equations.  \cite{Brezis, TW}. B. Hazarika et al. \cite{BH} introduced a Sobolev type spaces containing non-absolute integrable functions, associate with Kuelbs-Steadman spaces so called HK-Sobolev spaces. Sobolev spaces are subspaces of HK-Sobolev spaces (see \cite{BH}). The Poincar\'e inequality, motivate us to develope Poincar\'e type inequality on HK-Sobolev spaces. To execute our motivation, we extent Kuelbs-Steadman spaces on metric measure space too.

The article is structured as follows: in Section 2, basic concepts and terminology are introduced along with some definitions and findings.
Kuelbs-Steadman type spaces on metric measure spaces were first discussed in Section 3.
We explore Lipschitz type Kuelbs-Steadman spaces in Subsection 3.1. In additional, several inclusion properties are discussed in this section.
In fourth section, HK-Sobolev spaces and Lipschitz type HK-Sobolev spaces are explored on $\mathbb{R}^n$ and metric measure space $X,$ respectively.
In last section, we discuss the boundedness of maximal Hardy operators on Kuelbs-Steadman spaces and HK-Sobolev spaces, respectively.  

\section{Preliminaries}
 Throughout the article, with $\mu$ being a locally finite (i.e. finite on bounded sets) Borel regular measure on $\mathcal{X},$ we call  $(\mathcal{X},d)$ a separable metric space.
 
 There must be a constant $L \geq 0$ which satisfy the following condition for all $s, t \in \mathcal{X}$ $$ |\mathfrak{f}(s)-\mathfrak{f}(t)| \leq L d(s,t) ,$$ when  $\mathfrak{f}: \mathcal{X} \to \mathbb{R}$ is a Lipschitz function,

$\it Lip(\mathcal{X})$ stands for the set of Lipschitz functions on ${\mathcal{X}}$ and  lowest such constant $L$ called Lipschits constant, is denoted by $Lip(\mathfrak{f}).$
Whenever there are multiple points in $\mathcal{X}$, the following semi-norm is added to the space $Lip(\mathcal{X})$. $$l(\mathfrak{f})=  \sup\limits_{x, y \in \mathcal{X}} \frac{|\mathfrak{f}(t)-\mathfrak{f}(s)|}{d(t,s)}, ~where~ s \neq t,~\mathfrak{f} \in Lip(\mathcal{X}).$$ This doesn't constitute a norm because $\mathfrak{f}=0$ only if $\mathfrak{f}$ is constant (see \cite{Stud}). We refer to the quotient space $\frac{Lip(\mathcal{X})}{ const(\mathcal{X})}$ as $LIP(\mathcal{X})$ where $const(\mathcal{X})$ is the set of all real-valued constant maps on $\mathcal{X}$. As far as its norm $L(\mathfrak{f}+const(\mathcal{X}))=l(\mathfrak{f}),~\mathfrak{f} \in Lip(\mathcal{X})$ is concerned, the space $LIP(\mathcal{X})$ is a Banach space.  Also, recalling $LIP(\mathcal{X})$ will be a function space if for $x,~y$ are two points of $\mathcal{X},$ such that the functional $\mathfrak{f} \to \mathfrak{f}(x)-\mathfrak{f}(y) \in \mathbb{R}$ is well defined in $LIP(\mathcal{X}).$ For a Lipschitz function $\mathfrak{f}$ the modulus of gradient is given by the slope $|\nabla \mathfrak{f}|: \mathcal{X} \to \R,$ by 
\begin{eqnarray*}
|\nabla \mathfrak{f}|(x) = \lim\limits_{y \to x} \sup \frac{|\mathfrak{f}(y)-\mathfrak{f}(x)|}{d(y,x)}.
\end{eqnarray*}

  Recalling Sobolev spaces in the approach of Lipschitz as 
  \begin{eqnarray*}
  W^{1,p}(\Omega) = \bigg\{\mathfrak{f} \in D^\prime(\Omega):~\mathfrak{f} \in L^p(\Omega),~\nabla \mathfrak{f} \in L^p(\Omega)\bigg\}, \\
  L^{1,p}(\Omega)= \bigg\{\mathfrak{f} \in D^\prime(\Omega):~\nabla \mathfrak{f} \in L^p(\Omega)\bigg\}
\end{eqnarray*} 
where $\Omega \subseteq \R^n$ is an open set and $ 1 \leq p \leq \infty.$ It is well known that  ${W^{1,p}}(\R_I^n)$ is a Banach space with the norm $\|\mathfrak{f}\|_{{W^{1,p}}}= \|\mathfrak{f}\|_{{L^p}}+ \|\nabla \mathfrak{f}||_{{L^p}}$ and $L^{1,p}(\Omega)$ is endowed with a semi norm $\|\mathfrak{f}\|_{L^{1,p}} = \|\nabla \mathfrak{f}\|_{L^p}.$

 The equality of $L^{1,p}(\Omega)$ and $ W^{1,p}(\Omega)$ are found from the followings:
\begin{lem}
\cite[Lemma 7.16]{Gilbert} If $\mathfrak{f} \in L^{1,p}(Q)$, with $Q$ a cube in $\mathbb{R}^n$, then $$|\mathfrak{f}(x)-\mathfrak{f}_Q| \leq C\int_Q\frac{|\nabla \mathfrak{f}(y)|}{|x-y|^{n-1}}dy~a.e.~and ~L^{1,p}(\Omega)= W^{1,p}(\Omega),$$
where $\mathfrak{f}_q=\mu(q)^{-1}\int_Q \mathfrak{f} d\mu$ is the average value of $\mathfrak{f}$ over $Q$ and $C$ is a constant.
\end{lem}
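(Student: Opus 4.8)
The plan is to reduce the pointwise inequality to the case of smooth functions, prove it there by an elementary integration-along-segments argument exploiting the convexity of the cube, and then recover the general statement—and with it the identity $L^{1,p}(\Omega)=W^{1,p}(\Omega)$—by a mollification argument combined with the $L^p$-boundedness of the Riesz potential $I_1 g(x):=\int_Q |x-y|^{1-n}g(y)\,dy$ over the bounded set $Q$.

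First I would handle $\mathfrak{f}\in C^1(\bar Q)$. Since a cube is convex, for $x,y\in Q$ the whole segment $[x,y]$ lies in $Q$, so writing $\omega=(y-x)/|y-x|$ the fundamental theorem of calculus gives $\mathfrak{f}(x)-\mathfrak{f}(y)=-\int_0^{|x-y|}\nabla\mathfrak{f}(x+r\omega)\cdot\omega\,dr$. Averaging over $y\in Q$, taking absolute values, and passing to polar coordinates centred at $x$ (so $y=x+\rho\omega$, $dy=\rho^{n-1}\,d\rho\,d\omega$), I would use that the radial extent of $Q$ in each direction is at most $d_Q:=\operatorname{diam}Q$ and that the inner integral is nondecreasing in $\rho$, to bound the $\rho$-integral by $\tfrac{d_Q^{\,n}}{n}\int_0^\infty|\nabla\mathfrak{f}|(x+r\omega)\,dr$ (having extended $|\nabla\mathfrak{f}|$ by zero outside $Q$). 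Reassembling the resulting $(\omega,r)$-integral into a volume integral via the Jacobian $r^{n-1}$ yields $|\mathfrak{f}(x)-\mathfrak{f}_Q|\le \tfrac{d_Q^{\,n}}{n\,\mu(Q)}\int_Q\frac{|\nabla\mathfrak{f}(y)|}{|x-y|^{n-1}}\,dy$, which is the asserted bound with $C=d_Q^{\,n}/(n\,\mu(Q))$.

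Next I would remove the smoothness. If $\mathfrak{f}\in L^{1,p}(Q)$ then $\nabla\mathfrak{f}\in L^p(Q)\subset L^1(Q)$ since $\mu(Q)<\infty$, and the mollifications $\mathfrak{f}_\e=\mathfrak{f}*\rho_\e$ are smooth on any cube $Q'\Subset Q$ with $\nabla\mathfrak{f}_\e=(\nabla\mathfrak{f})*\rho_\e\to\nabla\mathfrak{f}$ and $\mathfrak{f}_\e\to\mathfrak{f}$ in $L^1(Q')$ and $(\mathfrak{f}_\e)_{Q'}\to\mathfrak{f}_{Q'}$ (this also shows that the distribution $\mathfrak{f}$ is represented by an $L^1_{loc}$ function, up to an additive constant). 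Applying the smooth estimate on $Q'$ to each $\mathfrak{f}_\e$, I would pass to the limit using that, by Fubini, $|x-y|^{1-n}$ is integrable in $x$ over $Q'$ uniformly in $y$, so $\|I_1|\nabla\mathfrak{f}_\e|-I_1|\nabla\mathfrak{f}|\|_{L^1(Q')}\le C_{Q'}\,\|\nabla\mathfrak{f}_\e-\nabla\mathfrak{f}\|_{L^1(Q')}\to 0$; choosing a subsequence along which $\mathfrak{f}_\e\to\mathfrak{f}$ and $I_1|\nabla\mathfrak{f}_\e|\to I_1|\nabla\mathfrak{f}|$ a.e.\ preserves the inequality, and letting $Q'\uparrow Q$ gives it a.e.\ on $Q$. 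For the equality of spaces, $W^{1,p}(\Omega)\subseteq L^{1,p}(\Omega)$ is immediate; conversely, for $\mathfrak{f}\in L^{1,p}(\Omega)$ and a cube $Q$ with $\bar Q\subset\Omega$, Young's inequality applied to the pointwise bound gives $\|\mathfrak{f}-\mathfrak{f}_Q\|_{L^p(Q)}\le C_Q\|\nabla\mathfrak{f}\|_{L^p(Q)}<\infty$, so $\mathfrak{f}\in L^p(Q)$; covering $\Omega$ by such cubes and using connectedness to reconcile the local averages (or, for a bounded $\Omega$, writing it as a finite union of cubes) upgrades this to $\mathfrak{f}\in L^p(\Omega)$, hence $\mathfrak{f}\in W^{1,p}(\Omega)$.

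I expect the main obstacle to be the limiting step in the nonsmooth case: it does not suffice that $\nabla\mathfrak{f}_\e\to\nabla\mathfrak{f}$—one must control the \emph{potentials} $I_1|\nabla\mathfrak{f}_\e|$, which rests on the elementary but essential uniform local integrability of the kernel $|x-y|^{1-n}$ in dimension $n$. A secondary subtlety is the global statement: $L^{1,p}(\Omega)=W^{1,p}(\Omega)$ really does need $\Omega$ connected and mildly regular, as it fails for disconnected or unbounded domains, so one should read the lemma with such a proviso on $\Omega$.
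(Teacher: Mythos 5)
Your argument is correct and coincides with the classical proof of the cited result (Gilbarg--Trudinger, Lemma 7.16), which the paper itself invokes by citation without reproducing: integration along segments plus polar coordinates on the convex cube for $C^1$ functions, then mollification together with the local integrability of the kernel $|x-y|^{1-n}$ to pass to general $\mathfrak{f}$ with $\nabla\mathfrak{f}\in L^p$. Your closing caveat is also apt: the appended identity $L^{1,p}(\Omega)=W^{1,p}(\Omega)$ does require additional hypotheses on $\Omega$, which the paper only supplies in the subsequent proposition (bounded domain with the extension property).
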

\begin{Def}
The domain $ \Omega \subseteq \R^n$ is said to have the extension property whether there is a bounded linear operator $E: W^{1,p}(\Omega) \to W^{1,p}(\R^n),$ such that for every $u \in W^{1,p}(\Omega),~Eu {| \Omega}= u $ a.e.
\end{Def}
\begin{prop}\cite{Gilbert}
If $\Omega$ is a bounded domain with the extension property then $L^{1,p}(\Omega)= W^{1,p}(\Omega).$
\end{prop}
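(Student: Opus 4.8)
\emph{Proof idea.} The plan is to isolate the one nontrivial assertion and reduce it to a Poincar\'e inequality. The inclusion $W^{1,p}(\Omega)\subseteq L^{1,p}(\Omega)$ is immediate from the definitions, since the $L^{1,p}$ seminorm only sees the gradient: $\|\mathfrak{f}\|_{L^{1,p}}=\|\nabla\mathfrak{f}\|_{L^p}\le\|\mathfrak{f}\|_{L^p}+\|\nabla\mathfrak{f}\|_{L^p}=\|\mathfrak{f}\|_{W^{1,p}}$. Hence everything rests on the converse: if $\mathfrak{f}\in D^\prime(\Omega)$ satisfies $\nabla\mathfrak{f}\in L^p(\Omega)$, then automatically $\mathfrak{f}\in L^p(\Omega)$ (this is where boundedness of $\Omega$ is used). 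I would deduce this from a Poincar\'e--Wirtinger-type estimate $\|\mathfrak{f}-\mathfrak{f}_\Omega\|_{L^p(\Omega)}\le C\|\nabla\mathfrak{f}\|_{L^p(\Omega)}$: since $\mu(\Omega)<\infty$ the average $\mathfrak{f}_\Omega$ is a finite constant, so such an estimate puts $\mathfrak{f}\in L^p(\Omega)$, i.e.\ $\mathfrak{f}\in W^{1,p}(\Omega)$; comparability of the two norms is then a routine closed-graph argument.

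First I would establish the Poincar\'e estimate on a cube, which is precisely what the preceding lemma provides. Writing $|\mathfrak{f}(x)-\mathfrak{f}_Q|\le C\int_Q|\nabla\mathfrak{f}(y)|\,|x-y|^{1-n}\,dy$, the right-hand side is a truncated Riesz potential of $|\nabla\mathfrak{f}|\in L^p(Q)$; since the kernel $|z|^{1-n}\mathbf{1}_{\{|z|\le\operatorname{diam}Q\}}$ lies in $L^1(\R^n)$ (the exponent $1-n$ beats $-n$), Young's convolution inequality gives $\big\|\,\mathfrak{f}-\mathfrak{f}_Q\,\big\|_{L^p(Q)}\le C^\prime\|\nabla\mathfrak{f}\|_{L^p(Q)}$, with $C^\prime$ depending only on $n$, $p$ and $\operatorname{diam}Q$. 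This settles the model case of a cube (and, by translation, of all cubes of a given size).

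Then I would transport the estimate to $\Omega$ using the extension property: enclosing $\bar\Omega$ in a cube $Q$, for $\mathfrak{f}\in W^{1,p}(\Omega)$ one forms $E\mathfrak{f}\in W^{1,p}(\R^n)$ with $E\mathfrak{f}|_\Omega=\mathfrak{f}$ and $\|E\mathfrak{f}\|_{W^{1,p}(Q)}\le\|E\|\,\|\mathfrak{f}\|_{W^{1,p}(\Omega)}$, applies the cube estimate to $E\mathfrak{f}$ on $Q$, and restricts back to $\Omega$. The main obstacle is that $E$ is only given on $W^{1,p}(\Omega)$ --- exactly the space we are trying to identify with $L^{1,p}(\Omega)$ --- so this argument is circular for $\mathfrak{f}\in L^{1,p}(\Omega)\setminus W^{1,p}(\Omega)$, and breaking the circularity is the crux. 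The way I would handle it: applying the cube estimate on cubes compactly contained in $\Omega$ first shows every $\mathfrak{f}\in L^{1,p}(\Omega)$ lies in $W^{1,p}_{\mathrm{loc}}(\Omega)$; then one exhausts $\Omega$ by subdomains $\Omega_k\uparrow\Omega$ chosen so that the extension norms and the resulting Poincar\'e constants remain bounded (this is where the uniform, John-type geometry behind the extension property genuinely enters), runs the previous step on each $\mathfrak{f}|_{\Omega_k}\in W^{1,p}(\Omega_k)$, checks that the averages $\mathfrak{f}_{\Omega_k}$ form a bounded --- in fact Cauchy --- sequence with limit some constant $c$, and passes to the limit via Fatou's lemma to get $\mathfrak{f}-c\in L^p(\Omega)$, hence $\mathfrak{f}\in L^p(\Omega)$. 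The remaining verifications are routine.
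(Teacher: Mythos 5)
The paper itself offers no argument here --- the proposition is quoted from [Gilbarg--Trudinger] --- so your proposal has to stand on its own. Your reduction is the right one: the inclusion $W^{1,p}(\Omega)\subseteq L^{1,p}(\Omega)$ is trivial, the cube case follows from the preceding lemma plus Young's inequality for the truncated Riesz kernel exactly as you say, and the whole content is to show that $\nabla\mathfrak{f}\in L^p(\Omega)$ forces $\mathfrak{f}\in L^p(\Omega)$. But the step you yourself flag as the crux is where the argument breaks down, in two places.

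First, the circularity-breaking device is not justified. The extension property is hypothesized for $\Omega$ only; nothing guarantees an exhaustion $\Omega_k\uparrow\Omega$ whose extension norms and Poincar\'e constants stay bounded. Extension domains need not have John-type geometry, and even for smooth exhausting subdomains the relevant constants typically degenerate as $\Omega_k$ approaches an irregular boundary --- indeed, uniform Poincar\'e constants for such an exhaustion are essentially equivalent to the Poincar\'e inequality on $\Omega$, i.e.\ to the statement being proved. Consequently the asserted Cauchy property of the averages $\mathfrak{f}_{\Omega_k}$ is also unsupported. The standard way around the circularity is truncation, not exhaustion: since $\mathfrak{f}\in W^{1,p}_{\mathrm{loc}}(\Omega)$, the functions $\mathfrak{f}_k=\max(-k,\min(\mathfrak{f},k))$ lie in $W^{1,p}(\Omega)$ (they are bounded on a bounded domain and $|\nabla \mathfrak{f}_k|\le|\nabla \mathfrak{f}|$), one applies the Poincar\'e inequality on $\Omega$ to each $\mathfrak{f}_k$ with a bound uniform in $k$, checks the averages $(\mathfrak{f}_k)_\Omega$ stay bounded (otherwise Fatou would force $|\mathfrak{f}|=\infty$ a.e.), and concludes $\mathfrak{f}-c\in L^p(\Omega)$ by Fatou. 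Second, even for genuine $W^{1,p}(\Omega)$ functions your extension argument does not produce the inequality you need: extending and applying the cube estimate bounds $\|\mathfrak{f}-c\|_{L^p(\Omega)}$ by $C\|E\|\,\|\mathfrak{f}\|_{W^{1,p}(\Omega)}$, whose right-hand side contains $\|\mathfrak{f}\|_{L^p(\Omega)}$, so it is useless in the truncation (or any limiting) argument, where $\|\mathfrak{f}_k\|_{L^p}$ is not under control. The gradient-only Poincar\'e inequality on an extension domain is usually obtained by compactness: extension plus Rellich--Kondrachov gives that $W^{1,p}(\Omega)\hookrightarrow L^p(\Omega)$ is compact, and the usual contradiction argument (using connectedness of the domain, so that functions with vanishing gradient are constant) yields $\|\mathfrak{f}-\mathfrak{f}_\Omega\|_{L^p(\Omega)}\le C\|\nabla\mathfrak{f}\|_{L^p(\Omega)}$. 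Finally, note that $\|\cdot\|_{L^{1,p}}$ is only a seminorm, so the conclusion to aim for is set equality (with norm comparability on the quotient by constants, or of $\|\cdot\|_{W^{1,p}}$ with $\|\nabla\cdot\|_{L^p}+|\mathfrak{f}_\Omega|$), rather than a literal equivalence of the two displayed norms.
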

The Hardy-Littlewood Maximal Operator for a locally integrable function $f$ on  $\mathcal{X}$ is provided by $\mathcal{M} \mathfrak{f} : \mathcal{X} \rightarrow [0, \infty]$ so that
\begin{equation}
\mathcal{M}\mathfrak{f}(x)= \sup\limits_{x \in B}\frac{1}{\mu(B)}
\int_{B}|\mathfrak{f}(t)|dt
\end{equation}
where $B$ is a ball in $\mathcal{X}$ containing the point $\mathcal{X}$ and  supremum is taken over all such ball. It is well known that the maximal operator is bounded in $L^p(\mathbb{R}^n)$ for $1<p\leq \infty$ (see \cite{Stein}). In \cite{Mf} for a doubling metric space $\mathcal{X}$, J. Heinonen proved that the maximal operator is bounded  in $L^p(\mathcal{X})$ for $1< p \leq \infty$ . Juha Kinnunem in the paper \cite{Kinnunem} proved that the maximal operator is also bounded in  Sobolev space $W^{1,p}(\mathbb{R}^n)$ for $1< p\leq \infty$. 
\begin{Def}
\cite{Mf} If there exist a positive constant $C$ such that $\mu(2B) \leq C \mu(B)$  for all balls $B$ on a space $\mathcal{X}$, then the measure $\mu$ on $\mathcal{X}$ is called doubling. $C$ is called doubling constant.
\end{Def}
\begin{thm}[Basic Covering Theorem]\cite{Mf} \label{BCL}
Let $\mathcal{F}$ be a family of balls with uniformly bounded diamete in a metric space $\mathcal{X}$. Then $\exists$ a disjointed subfamily $\mathcal{G}$ with the following properties: every ball $B \in \mathcal{F}, ~\exists $  a ball $B'$ in $\mathcal{G}$ such that 
\begin{equation}
B \cap B' \neq \phi~~~ \mbox{and}~~ ~radius(B') \geq \frac{1}{2} radius(B)
\end{equation}
In fact, \begin{equation}
\bigcup\limits_{B \in \mathcal{F}}B \subset \bigcup\limits_{B \in \mathcal{G}}5B
\end{equation}

\section{ Kuelbs-Steadman spaces on metric measure spaces}
 We introduce, in this section the theory of  Kuelbs-Steadman spaces on metric measure space. We also introduce  Lipschitz-type Kuelbs-Steadman spaces which in brif, we will call  as Lipschitz-Steadman spaces. We'll talk about these spaces' fundamental characteristics.. Finally, we will find the inclusion relations of Lipschitz-Steadman spaces and Lipschitz-type Lebesgue spaces $L^{1,p}({\R}^n).$ Here we recalling the Kuelbs-Steadman spaces $\mathcal{KS}^p(\R^n)$

 We discuss a few results of $\mathcal{KS}^p(\R^n)$ that are useful in a later section of our work. For more details see \cite{Gill}.
 \begin{thm}
 For $ 1\leq p \leq \infty,~D^\prime(\R^n)$ is a subset of $\mathcal{KS}^p(\R^n).$
  \end{thm}
  
  The followimg theorem is a particular case  of   \cite[theorem 3.0.4]{HK}
  \begin{thm}
    For $1 < p \leq \infty$, we have $\|M_l \mathfrak{f} \|_{\mathcal{KS}^p(\mathbb{R}^n)} \leq C_p \|\mathfrak{f}\|_{\mathcal{KS}^p(\mathbb{R}^n)}$ for all $\mathfrak{f} \in \mathcal{KS}^p(\R^n),$  where $C_p $ depends on $p$ but not $\mathfrak{f}.$
  \end{thm}
 Now, we will extend the theory of Kuelbs-Steadman spaces on spearable metric spaces. Let $(\mathcal{X}, d)$ be separable metric space with a regular Borel measure $\mu$.  Let $\mathcal{S}$ be a countable dense subset of $\mathcal{X}$. Since $\mathcal{S}$ is countable, we can arrange the elements of $\mathcal{S}$ as $\{x_1 , x_2 , x_3,... \}$. Now for each $r, j \in \mathbb{N}$ let $B_r(x_i)$ be the closed ball centered at $x_i$ with radius $k$. Using natural order that translates  $\mathbb{N} \times \mathbb{N}$ bijectively to $\mathbb{N}$, we can consider $\{ B_r , r \in \mathbb{N} \}$ as countable collection of all closed balls whose center lies in $\mathcal{S}.$

 Assuming the characteristic function $\chi_k$ on $B_k$, we have $\chi_k \in \mathit{L}^p(\mathcal{X}) \cap \mathit{L}^\infty(\mathcal{X})$ for $1\leq p < \infty$. Also, we consider a sequence $\{\tau_r\}$  of positive real numbers which satisfy $\sum\limits_{r=1}^\infty \tau_r =1$.  We define 
$$\|\mathfrak{f}\|_{\mathcal{KS}^p(\mathcal{X})} = \left\{\begin{array}{cc}\left(\sum\limits_{r=1}^\infty \tau_r \left|\int_{\mathcal{X}}\chi_r(t) \mathfrak{f}(t)d\mu \right|^p\right)^{\frac{1}{p}} & \mbox{~when~} 1\leq p<\infty \\     
  \sup\limits_{r \geq 1} \left|\int_{\mathcal{X}} \chi_r(t)\mathfrak{f}(t)d\mu(t)\right| & \mbox{~when~} p~ is ~\infty \end{array}\right. $$
  One can easily check that $\|\cdot\|_{\mathcal{KS}^p(\mathcal{X})}$ defines a norm on $L^p(\mathcal{X})$.  The completion of $L^p(\mathcal{X})$ with respect to this norm is called is defined as Kuelbs-Steadman space on $\mathcal{X}$, denoted by $\mathcal{KS}^p(\mathcal{X}).$ If $p=1,$ then $\|\cdot\|_{\mathcal{KS}^1(\mathcal{X})}$ will be a norm of $\mathcal{KS}^1(\mathcal{X}).$ In case, if the topology is weak one, we call $\mathcal{KS}^1(\mathcal{X})$ as weak Kuelbs-Steadman spaces. We denote this spaces as $\mathcal{KS}^1_w(\mathcal{X}).$  One can see \cite{ABH} for details. We will discuss a few fundamental results of $\mathcal{KS}^p(\mathcal{X})$ as follows:
  \begin{thm}
  For every $q,~1 \leq q \leq \infty,$ the Lebesgue space $L^q(\mathcal{X})$ is dense continuous embedding subset of $ \mathcal{KS}^p(\mathcal{X}).$
  \end{thm}
  \begin{proof}
  The space $\mathcal{KS}^p(\mathcal{X})$ is contained in $L^p(\mathcal{X})$ densely by the construction of $\mathcal{KS}^p(\mathcal{X})$. Now, for $q \neq p$, we need to show that $L^p(\mathcal{X}) \subset \mathcal{KS}^p(\mathcal{X})$. We take into account the following situations:
  \begin{enumerate}
  \item For $p= \infty$ the result is trivial.
  \item  For $p < \infty$, we consider $\mathfrak{f} \in L^q(\mathcal{X})$ and $q < \infty$, then
   \begin{align*}
   \|\mathfrak{f}\|_{\mathcal{KS}^p(\mathcal{X})} &= \left\{ \sum\limits_{r=1}^\infty \tau_r \left| \int_\mathcal{X} \chi_r(t)\mathfrak{f}(t)d\mu(t)\right|^{\frac{qp}{q}}\right\}^{\frac{1}{p}}\\
   & \leq \left\{ \sum\limits_{r=1}^\infty \tau_r \left( \int_\mathcal{X} \chi_r(t)|\mathfrak{f}(t)|^qd\mu(t)\right)^{\frac{p}{q}}\right\}^{\frac{1}{p}}\\
   & \leq \sup\limits_r \left( \int_X \chi_r(t)|\mathfrak{f}(t)|^q d\mu(t) \right)^\frac{1}{q} \leq \|\mathfrak{f}\|_q.
   \end{align*}
   $\therefore ~ L^p(\mathcal{X}) \subset \mathcal{KS}^p(\mathcal{X}).$  
   
   \item For $q= \infty $, we have 
   \begin{align*}
   \|\mathfrak{f}\|_{\mathcal{KS}^p} &=  \left[ \sum\limits_{r=1}^\infty \tau_r \left| \int_\mathcal{X} \chi_r(t)\mathfrak{f}(t)d\mu(t)\right|^p \right ]^{\frac{1}{p}}\\
   & \leq \left\{ \left(\sum\limits_{r=1}^\infty \tau_r(\mu(B_r))^p\right)( \operatorname{ess}\sup|\mathfrak{f}|)^p\right\}^{\frac{1}{p}} \leq M \|\mathfrak{f}\|_\infty .  
   \end{align*}
   Thus $L^\infty (\mathcal{X}) \subset \mathcal{KS}^p(\mathcal{X}).$
  \end{enumerate}

  \end{proof}
\begin{rem} Two exponents $p$ and $q$ are called conjugate exponent is they staisfy $\frac{1}{p}+\frac{1}{q} = 1$.
\end{rem}
  \begin{thm}
  \rm(H\"older type inequality) \rm Consider $1\leq p,q < \infty$ are conjugate  exponent. When $\mathfrak{f} \in \mathcal{KS}^p(\mathcal{X})$ and $\mathfrak{g} \in \mathcal{KS}^q(\mathcal{X})$, then $\mathfrak{f},\mathfrak{g} \in \mathcal{KS}^1(\mathcal{X})$ and $\|\mathfrak{fg}\|_{\mathcal{KS}^1} \leq \|\mathfrak{f}\|_{\mathcal{KS}^p}. \|\mathfrak{g}\|_{\mathcal{KS}^q}$
  \end{thm}
  \begin{proof}
  To prove the inequality, we used generalised form of arithmitic-geometric mean inequality: if $A, b \geq 0$, and $0 \leq \theta \leq 1$, then
  \begin{equation}
  A^\theta B^\theta \leq \theta A + (1- \theta)B.
  \end{equation}
  
  If $\|\mathfrak{f}\|_{\mathcal{KS}^p(\mathcal{X})} = 0 $ or $ \|\mathfrak{g}\|_{\mathcal{KS}^q(\mathcal{X})}=0$, then $\mathfrak{fg} = 0~ a.e.$ and  the inequality is obvious. So, we consider neither $\|\mathfrak{f}\|_{\mathcal{KS}^p(\mathcal{X})} =0$ nor $\|g\|_{\mathcal{KS}^q(\mathcal{X})} =0.$ Now if we replace $\mathfrak{f}$ by $\mathfrak{f}/{\|\mathfrak{f}\|_{\mathcal{KS}^p}} $ and $\mathfrak{g}$ by $\mathfrak{g}/ {\|\mathfrak{g}\|_{\mathcal{KS}^q}}$ and assume $\|\mathfrak{f}\|_{\mathcal{KS}^p(\mathcal{X})} = 1$ and $\|g\|_{\mathcal{KS}^q(\mathcal{X})}=1$, we need to show that $\|\mathfrak{f}g\|_{\mathcal{KS}^1} \leq 1.$
  
  Setting $A= |\mathfrak{f}(t)|^p$, $B= |\mathfrak{g}(t)|^q$ and $\theta = 1/p$ so that $1-\theta = 1/q,$ we get
  \begin{equation}\label{Hol}
  |\mathfrak{f}(t)g(t)| \leq \frac{1}{p}|\mathfrak{f}(t)|^p + \frac{1}{q} |\mathfrak{g}(t)|^q .
  \end{equation}
  
  Now, Using inequality (\ref{Hol}) 
  \begin{align*}
  \|\mathfrak{fg}\|_{\mathcal{KS}^1}& = \sum\limits_{r=1}^\infty \tau_r \left| \int_\mathcal{X} \chi_r(t)\mathfrak{f}(t)\mathfrak{g}(t)d\mu(t)\right| \leq \sum\limits_{r=1}^\infty \tau_r  \int_X \chi_r(t)\left|\mathfrak{f}(t)\mathfrak{g}(t)\right| d\mu(t)\\
  &\leq \sum\limits_{r=1}^\infty \tau_r  \int_\mathcal{X} \chi_r(t)\left( \frac{1}{p}|\mathfrak{f}(t)|^p + \frac{1}{q} |\mathfrak{g}(t)|^q \right) d\mu(t).
  \end{align*}
  This conclude that $\|\mathfrak{fg}\|_{\mathcal{KS}^1} \leq 1$ and this completes the proof.
  \end{proof}
  \begin{thm}
  \rm(Minkowski) \rm Sum of two function in  $\mathcal{KS}^p(\mathcal{X})$  belongs to  $ \mathcal{KS}^p(\mathcal{X})$. Moreover, $\|\mathfrak{f}+\mathfrak{g}\|_{\mathcal{KS}^p(\mathcal{X})} \leq ||\mathfrak{f}||_{\mathcal{KS}^p(\mathcal{X})} + ||\mathfrak{g}||_{\mathcal{\mathcal{KS}}^p(\mathcal{X})}.$
  \end{thm}
  \begin{proof} For any $\mathfrak{f}, \mathfrak{g} \in \mathcal{KS}^p(\mathcal{X})$ and $p\neq \infty$, we have
  \begin{align*}
  \|\mathfrak{f}+\mathfrak{g}\|_{\mathcal{KS}^p(\mathcal{X})} &= \left[ \sum\limits_{r=1}^\infty \tau_r \left| \int_\mathcal{X} \chi_r(t)(\mathfrak{f}+\mathfrak{g})(t)d\mu(t) \right|^p\right]^{\frac{1}{p}} \\
  &= \left[ \sum\limits_{r=1}^\infty \tau_r \left|\left\{ \int_\mathcal{X} \chi_r(t)\mathfrak{f}(t)d\mu(t)\right\} + \left\{ \int_\mathcal{X} \chi_r(t)\mathfrak{g}(t)d\mu(t)\right\} \right|^p\right]^{\frac{1}{p}}\\
  &\leq  \left[ \sum\limits_{r=1}^\infty \tau_r \left| \int_\mathcal{X} \chi_r(t)\mathfrak{f}(t)d\mu(t)\right|^p + \sum\limits_{r=1}^\infty \tau_r \left|\int_\mathcal{X} \chi_r(t)\mathfrak{g}(t)d\mu(t)\right|^p\right]^{\frac{1}{p}}\\
  & \leq \|\mathfrak{f}\|_{\mathcal{KS}^p(\mathcal{X})} + \|\mathfrak{g}\|_{\mathcal{KS}^p(\mathcal{X})}.
  \end{align*}
  For $p= \infty$, we have
  \begin{align*}
  \|\mathfrak{f}+\mathfrak{g}\|_{\mathcal{KS}^\infty(\mathcal{X})} &= \sup\limits_{r \geq 1} \left|\int_{\mathcal{X}} \chi_r(t)(\mathfrak{f}+\mathfrak{g})(t)d\mu(t)\right| \\
   &\leq \sup\limits_{r \geq 1} \left|\int_{\mathcal{X}} \chi_r(t)\mathfrak{f}(t)d\mu(t)\right| + \sup\limits_{r \geq 1} \left|\int_{X} \chi_r(t)\mathfrak{g}(t)d\mu(t)\right| = \|\mathfrak{f}\|_{\mathcal{KS}^\infty(X)}+ \|\mathfrak{g}\|_{\mathcal{KS}^\infty(\mathcal{X})}.
  \end{align*}
  \end{proof}
  
  The space  $\mathcal{KS}^p$  transforms into a metric space with metric $d(\mathfrak{f},\mathfrak{g})=\|\mathfrak{f}- \mathfrak{g}\|_ {\mathcal{KS}^p}$ as a result of the triangle inequality. The fundamental analytical truth is that $\mathcal{KS}^p(\mathcal{X})$ is complete in the notion that each Cauchy sequence just like in norm $\|\cdot\|_{\mathcal{KS}p(\mathcal{X})}$ accumulates to an entry in $\mathcal{KS}^p(\mathcal{X})$.
  \begin{thm}
  The space $\mathcal{KS}^p(\mathcal{X})$ is a Banach space in the norm $\|\cdot\|_{\mathcal{KS}^p(\mathcal{X})}.$
  \end{thm}
  \begin{proof}
 Being a completion of $L^p(\mathcal{X})$ with regards to  $\|\cdot\|_{\mathcal{KS}^p(\mathcal{X})}$, the space $\mathcal{KS}^p(\mathcal{X})$ is complete.
  \end{proof}
  To start additional notes, we examine several potential inclusion relations between different $\mathcal{KS}^p(\mathcal{X})$ spaces. If the core space has a finite measure, the situation is straightforward. 
  \begin{prop}
With a finite positive measure, the space $\mathfrak{X}$ satisfies  $\mathcal{KS}^{p_1}(\mathcal{X}) \subset \mathcal{KS}^{p_0}(\mathcal{X})$ for  $p_0 \leq p_1,$. Futhermore, if $\mathfrak{f} \in \mathcal{KS}^{p_1},$ then there exists $c_0$ and $c_1$ such that 
$$c_0 \|\mathfrak{f}\|_{\mathcal{KS}^{p_0}} \leq c_1 \|\mathfrak{f}\|_{\mathcal{KS}^{p_1}},$$
  where $c_0=\frac{1}{(\sum_{k=1}^\infty \mu(B_k))^\frac{1}{p_0}}$ and $c_1= \frac{1}{(\sum_{k=1}^\infty \mu(B_k))^\frac{1}{p_1}}$ and $B_k$ is the closed ball as described in the definition of the space $\mathcal{KS}^p(\mathcal{X}).$
  \end{prop}
  \begin{proof}
  We may assume $p_0 < p_1.$ If $\mathfrak{f} \in \mathcal{KS}^{p_1}(\mathcal{X})$, taking $\mathtt{F}= |\chi_k \mathfrak{f}|^{p_0}$ (where $\chi_k$ is the characteristic map on the ball $B_k$ as in the deinition of $\mathcal{KS}^p$ norm), $\mathtt{G}=1$, $p = \frac{p_1}{p_0} >1$ and $\frac{1}{p}+\frac{1}{q}=1$, we get applying H\"oder's inequality to $\mathtt{F}$ and $\mathtt{G}$ that
  \begin{align*}
  \|\mathfrak{f}\|_{\mathcal{KS}^p}^{p_0} \leq \left( \sum_{k=1}^\infty \int_X \left|\chi_k \mathfrak{f}~ d\mu\right|^{p_1}\right)^{\frac{p_0}{p_1}} \left(\sum_{k=1}^\infty \mu(B_k)\right)^{1-\frac{p_0}{p_1}}
  \end{align*}
taking $p_0^{th}$  root in the above inequality gives the required result.
  \end{proof}

  \subsection{\bf Lipschitz type Kuelb-Steadman spaces}
  We will discuss Lipschitz type Kuelbs-Steadman spaces on separable metric spaces $\mathcal{X}$ (in short $\mathcal{KS}^p(\mathcal{X})$). To understand Lipschitz type $\mathcal{KS}^p(\mathcal{X}),$  we need to understand Lipschitz type $\mathcal{KS}^p$ spaces on $\mathbb{R}^n.$\\
  { We now define the Lipschitz Type $\mathcal{KS}^{1,p}(\R^n)$ as follows:}
  
  \begin{Def}
  Let us define 
  \begin{eqnarray}
 \mathcal{KS}^{1,p}(\mathbb{R}^n) = \bigg\{\mathfrak{f} \in D^\prime(\R^n):~\nabla \mathfrak{f} \in K{S^p}(\R^n)\bigg\}.
  \end{eqnarray}
   \end{Def}
  $\mathcal{KS}^{1,p}(\R^n)$ is endowed with a semi norm $\|\mathfrak{f}\|_{K{S^{1,p}}}=  \|\nabla \mathfrak{f}\|_{\mathcal{KS}^p}.$
  \begin{rem}
  $\|f\|_{\mathcal{KS}^{1,p}}=  \|\nabla \mathfrak{f}\|_{\mathcal{KS}^p}$ will not be a norm, because it annihilates constant functions.
  \end{rem}
  \begin{thm}
  For each $p,~1 \leq p < \infty,~L^{1,p}(\R^n) \subset \mathcal{KS}^{1,p}(\R^n)$ as a dense continuous embedding.
  \end{thm}
  \begin{proof}
    If $f$ is in $ L^{1,\mathtt{p}}(\R^n)$,  then we obtain 
  \begin{align*}
  \|\mathfrak{f}\|_{\mathcal{KS}^{1,p}(\R^n)} &= \|\nabla \mathfrak{f}\|_{\mathcal{KS}^{p}(\R^n)} \\& = \bigg(\sum\limits_{z=1}^{\infty}t_z \left|\int_{\R^n}\mathcal{E}_z(t) \nabla \mathfrak{f} d\mu(t)\right|^p\bigg)^\frac{1}{p} \\& \leq \sum\limits_{z=1}^{\infty} \bigg( t_z \int_{\R^n}\mathcal{E}_z(t) |\nabla \mathfrak{f}|^p d\mu(x)\bigg)^\frac{1}{p} \\& \leq \sup\limits_z \bigg( \int_{\R^n}\mathcal{E}_z(t) |\nabla \mathfrak{f}|^p d\mu(t)\bigg)^\frac{1}{p} \\&\leq \|\mathfrak{f}\|_{L^{1,p}(\R^n)}.
  \end{align*}
 This brings the proof to a close.
  \end{proof}
  \begin{thm}
Whenever $\mathfrak{f}_n \to \mathfrak{f} $ weakly in $L^{1,p}(\R^n)$ for every $ 1 \leq p < \infty,$ then $\mathfrak{f}_n$  converges  strongly to $\mathfrak{f}$. Namely, any weakly compact subset of $L^{1,p}(\R^n) $ is compact in $\mathcal{KS}^{1,p}(\R^n). $  
  
 \end{thm}
 \begin{proof}
 Let $\mathfrak{f}_n \to f$ weakly on compact subset $K \subseteq L^{1,p}(X).$ So, $\|\mathfrak{f}_n -\mathfrak{f}\|_{L^{1,p}(X)} \to 0$ (weakly) on $K \subseteq L^{1,p}(\mathcal{X}).$ So, $\|\mathfrak{f}_n-\mathfrak{f}\|_{L^{1,p}(\mathcal{X})}=\|\nabla(\mathfrak{f}_n-\mathfrak{f})\|_{L^p(\mathcal{X})}$ (weakly) on $K \subseteq L^p(\mathcal{X}).$ Now, from the \cite[Theorem 3.27]{Gill}, $\|\nabla(\mathfrak{f}_n-\mathfrak{f})\|_{\mathcal{KS}^p(\mathcal{X})} \to 0$ (strongly) on $\mathcal{KS}^p(\mathcal{X}).$ Consequently, $\mathfrak{f}_n \to \mathfrak{f}$ strongly on $\mathcal{KS}^{1,p}(\mathcal{X}).$
 \end{proof}
  \begin{thm}
  For each $q,~1 \leq q < \infty,~ \frac{1}{p}+\frac{1}{q}=1, ~L^{1,q}(\R^n) \subset \mathcal{KS}^{1,p}(\R^n)$ as a dense continuous embedding.
  \end{thm}
  \begin{proof}
 Assume that $q \neq p,~p< \infty.$ If we consider $q < \infty$ and $\mathfrak{f}$ belongs to $L^{1,q}(\R^n),$ we obtain
  \begin{align*}
  \|\mathfrak{f}\|_{\mathcal{KS}^{1,p}(\R^n)} &= \bigg( \sum\limits_{z=1}^{\infty} t_z \left| \int_{\R^n}\mathcal{E}_z(\mathtt{x}) \nabla \mathfrak{f}(\mathtt{x})d\mu(\mathtt{x})\right|^\frac{qp}{p}\bigg)^\frac{1}{p} \\& \leq \bigg( \sum\limits_{z=1}^{\infty}t_z \bigg(\int_{\R^n}\mathcal{E}_z(\mathtt{x}) \big|\nabla \mathfrak{f}(\mathtt{x})\big|^q d\mu(\mathtt{x})\big)^\frac{p}{q}\bigg)^\frac{1}{p} \\ &\leq \sup\limits_z\bigg(\int_{\R^n} \mathcal{E}_z(\mathtt{x}) \big|\nabla f(\mathtt{x})\big|^q d\mu(\mathtt{x})\bigg)^\frac{1}{q} \\& \leq ||\nabla f||_{L^{1,q}}.
  \end{align*}
  This validates our result.
  
  \end{proof}
  \begin{cor}
   For $q= \infty,  ~L^{1,q}(\R^n) \subset \mathcal{KS}^{1,p}(\R^n)$ as a dense continuous embedding.
  \end{cor}
 
 \begin{thm}\label{hk11}
 Let $\mathfrak{f}$ be a measurable function on $\Omega \subseteq \R^n$ with extension property. Then $\mathfrak{f} \in \mathcal{KS}^{1,p}(\R^n),~1 \leq p \leq \infty $, if and only if there exists a non negative  function $\mathfrak{g} \in K{S^p}(\R^n)$ such that $$|\mathfrak{f}(x)-\mathfrak{f}(y)| \leq |x-y|\bigg(\mathfrak{g}(x)+\mathfrak{g}(y)\bigg)~a.e..$$
 \end{thm}
 \begin{proof}
 Since the maximal operator is bounded in $\mathcal{KS}^p(\R^n),~p>1,$ we have that if $\mathfrak{f} \in \mathcal{KS}^{1,p}(\Omega)$ where $\Omega = \R^n$ or $\Omega $ is a bounded domain with the extension property, then there exists a non-negative function $\mathfrak{g} \in \mathcal{KS}^p(\Omega)$ such that $$|\mathfrak{f}(x)-\mathfrak{f}(y)| \leq |x-y|\bigg(\mathfrak{g}(x)+\mathfrak{g}(y)\bigg)~a.e..$$ The detailed of the proof is similar \cite[Theorem 1]{Priotr}.
 \end{proof}
 \begin{thm}
 Let $\mathfrak{f}$ be a measurable function on $\Omega \subseteq \R^n$ with extension property then $Lip(\R^n) \subseteq \mathcal{KS}^{1,p}(\R^n).$
 \end{thm}
 \begin{proof}
 Let $\mathfrak{f} \in Lip(\mathbb{R}^n),$ with extension property. Then there exists a bounded linear operator $E:Lip(\Omega) \to Lip(\mathbb{R}^n)$ such thet for every $\mathfrak{f} \in Lip(\Omega),~E_{\mathfrak{f}_{|_\Omega}}=\mathfrak{f}$ a.e.. Hence $|\nabla \mathfrak{f}|(x)= E_{|\nabla \mathfrak{f}|_{|_\Omega}}=f$ a.e.. Hence, $$|\mathfrak{f}(x)-\mathfrak{f}(y)| \leq |x-y|\bigg(\mathfrak{g}(x)+\mathfrak{g}(y)\bigg)~a.e..$$ The Theorem \ref{hk11} gives that $\mathfrak{f} \in \mathcal{KS}^{1,p}(\R^n).$ Hence $Lip(\R^n) \subseteq \mathcal{KS}^{1,p}(\R^n).$
 \end{proof}
 We are coming to Lipschitz type Kuelbs-Steadman spaces on separable metric spaces $\mathcal{X}$ as follows:  let $(\mathcal{X}, d, \mu)$ be a separable metric measure space $(\mathcal{X},d)$ with finite diameter $diam \mathcal{X}= \sup\limits_{x, y \in \mathcal{X}}d(x,y) < \infty$ and a finite positive Borel measure $\mu.$ We can find the class $\mathcal{KS}^{1,p}(\mathcal{X})$  from the Theorem \ref{hk11} as follow:
 \begin{eqnarray*}
 \mathcal{KS}^{1,p}(\mathcal{X})= \big\{\mathfrak{f}: \mathcal{X} \to \mathbb{R} ~|~\mathfrak{f}~{\textit~is~measurable~and}~\exists~E \subset \mathcal{X},~\mu(E)=0~a.e.~and~\mathfrak{g} \in \mathcal{KS}^{p}(\mathcal{X})~ such~ that \\|\mathfrak{f}(x)-\mathfrak{f}(y)| \leq d(x,y)(\mathfrak{g}(x)+\mathfrak{g}(y))~\forall~x,y \in X\setminus E\big\}.
 \end{eqnarray*}
  The space $\mathcal{KS}^{1,p}(\mathcal{X})$ is equipped with the semi norm $$\|\mathfrak{f}\|_{\mathcal{KS}^{1,p}(\mathcal{X})}= \inf\|\mathfrak{g}\|_{\mathcal{KS}^{p}(\mathcal{X})},$$
  where the infimum is taken over all such $\mathfrak{g}$ in the definition of $\mathcal{KS}^{1,p}(\mathcal{X}).$
  
 \begin{thm}
 If $1 \leq p \leq \infty$ then to every $\mathfrak{f} \in \mathcal{KS}^{1,p}(\mathcal{X}),$ there exists the unique $\mathfrak{g} \in \mathcal{KS}^{p}(\mathcal{X})$ which minimizes $\mathcal{KS}^{p}$ norm among the function which can be used in the definition $\|\mathfrak{f}\|_{\mathcal{KS}^{1,p}(\mathcal{X})}.$
 \end{thm}
 \begin{proof}
  Mazur's lemma or Banach-Sak's theorem  gives the existence of a minimizer. The uniquess follows from  uniform convexity of $\mathcal{KS}^{p}(\mathcal{X}).$
  
 \end{proof}
 \begin{rem}
We know that $\parallel \cdot \parallel_{\mathcal{KS}^{1,p}}$ is semi norm on $\mathcal{KS}^{1,p}(\mathcal{X}),$ but it certainly induces the norm on the quotient space $\overbrace{\mathcal{KS}^{1,p}} = \mathcal{KS}^{1,p}/ Const(\mathcal{X})$, where $Const(\mathcal{X})$ is set of all constant function in $\mathcal{X}$. The following corollary is about the completeness of the space $\overbrace{\mathcal{KS}^{1,p}}.$
\end{rem}
\begin{cor}
$\overbrace{\mathcal{KS}^{1,p}}$ is a Banach space with respect to the norm induced from $\mathcal{KS}^{1,p}(\mathcal{X}).$
\end{cor}
 \section{  HK-Sobolev spaces}
   
 Suppose $ k \in \N,~~1 \leq p \leq \infty.$ 
The set of all functions $\mathfrak{f}$ on $\mathbb{R}^n$ such that each mixed partial erivative $$\D^{(\alpha)}(\mathfrak{f}) = \frac{\partial^{|\alpha|} \mathfrak{f}}{\partial x_{1}^{\alpha_1}....\partial x_{n}^{\alpha_n}}$$ exists in the weak sense for every multi-index $\alpha$ with $ |\alpha| \leq k$ is known as the HK-Sobolev space, denoted by $W{S^{k,p}}(\R^n).$

      Therefore the HK-Sobolev  space $W{S^{k,p}}\big(\R^n\big)$ is the space  $$W{S^{k,p}}(\R^n) = \{ \mathfrak{f} \in \mathcal{KS}^{p}(\R^n):~~\D^{\alpha} \mathfrak{f} \in \mathcal{KS}^{p}(\R^n),~\forall |\alpha| \leq k \}.$$  $k $ is degignated as the order of  $W{S^{k,p}}(\R^n).$ 
      
      A norm for $W{S^{k,p}}(\R^n) $ is defined as  $$\|\mathfrak{f}\|_{W{S^{k,p}}(\R^n)}  =\left[\begin{array}{c}\left\{\sum\limits_{|\alpha| \leq k}\|\D^{\alpha} \mathfrak{f} \|_{\mathcal{KS}^p}^{p} \right\}^{\frac{1}{p}}, \mbox{~for~} 1\leq p<\infty;\\
  \max\limits_{|\alpha| \leq k}\|\D^\alpha \mathfrak{f}\|_{\mathcal{KS}^\infty}~~, \mbox{~for~} p=\infty \end{array}\right.$$  
  Recalling $L^p(\R^n) \subset \mathcal{KS}^2(\R^n)$ as dense continuous embeddings.\\ Functional analytic properties of $W{S^{k,p}}(\R^n)$ and $W{S_{0}^{k,p}}(\R^n)$ follow by considering their natural imbedding into the product of $N_k$ copies of $\mathcal{KS}^{p}(\R^n)$ where $N_k$ is the number of multi-indices $\alpha$ satisfying $|\alpha| \leq k.$
 \begin{thm}
 \cite{BH} $W{S^{k,p}}(\R^n)$ is separable for $ 1 \leq p < \infty.$
 \end{thm}
 \begin{thm}
 \cite{BH} $W{S^{k,p}}(\R^n)$ is reflexive for $ 1 < p < \infty.$
 \end{thm}

   \begin{thm}\label{april9}
   For $ 1 \leq q < \infty,$ $W^{k,q}(\R^n) \hookrightarrow W{S^{k,q}}(\mathbb{R}^n).$  
   \end{thm}
   \begin{proof}
    Let $ \mathfrak{f} \in W^{k,q}(\mathbb{R}^n).$ Then we have 
    \begin{align*}
    \|\mathfrak{f}\|_{W{S^{k,q}}(\mathbb{R}^n)} &= \left[ \sum_{|\alpha| \leq k}\| \D^\alpha \mathfrak{f} \|_{\mathcal{KS}^q}^{q}\right]^{\frac{1}{q}} \\&= \left[ \sum_{|\alpha| \leq k}  \sum_{r=1}^{\infty} \tau_r \left| \int_{\mathbb{R}^n} \chi_r(t) \D^\alpha \mathfrak{f}(t) d \lambda_\infty(t)\right|^q\right]^{\frac{1}{q}} \\&\leq \left[ \sum_{|\alpha| \leq k }\sum_{r=1}^{\infty} \tau_r  \int_{\mathbb{R}^n} \chi_r(t)| \D^\alpha \mathfrak{f}(t)|^q d \lambda_\infty (t))^{\frac{1}{q}}\right] \\& \leq \sup_{|\alpha| \leq k}\left\{ \int_{\mathbb{R}^n} \chi_r(t) | \D^\alpha \mathfrak{f}(t) |^q d \lambda_\infty(t)\right\}^{\frac{1}{q}}\leq \|\mathfrak{f}\|_{W^{k,q}(\mathbb{R}^n)}.
    \end{align*}
   \end{proof}
    \subsection{ Lipschitz tpye HK-Sobolev space $W{S^{1,p}}(\R^n)$}
    It is known that Sobolev space $W^{1,p}$ consists of Lipschitz functions. The Theorem \ref{april9} confirm us that   Lipschitz functions are also in $W{S^{1,p}}.$ 
 
  We can define the  Lipschitz type HK-Sobolev spaces as follows:  
  \begin{Def}
  \begin{eqnarray}
   W{S^{1,p}}(\R^n) = \big\{\mathfrak{f} \in \mathcal{KS}^{p}(\R^n):~\nabla \mathfrak{f} \in K{S^p}(\R^n).\big\}
  \end{eqnarray}
  \end{Def}
  It is not hard to see that $W{S^{1,p}}(\R^n)$ is a Banach space with the norm $$\|\mathfrak{f}\|_{W{S^{1,p}}}= \|\mathfrak{f}\|_{\mathcal{KS}^{p}}+ \|\nabla \mathfrak{f}|\|_{\mathcal{KS}^{p}}.$$
  \begin{Def}
  Let $1 \leq p < \infty.$ The HK-Sobolev space with zero boundary values $W{S_{0}^{1,p}}(\R^n)$ is the completion of $C_0^\infty(\R^n)$ with respect to HK-Sobolev norm. Thus $\mathfrak{f} \in W{S_{0}^{1,p}}(\R^n)$ if and only if there exist functions $\mathfrak{f}_i \in C_0^\infty(\R^n),~i=1,2,3,...$ such that $\mathfrak{f}_i \to \mathfrak{f}$ in $W{S^{1,p}}(\R^n)$ as $ i \to \infty.$
  \end{Def}
  The space $W{S_{0}^{1,p}}(\R^n)$ is endowed with the norm of $W{S^{1,p}}(\R^n).$
  \begin{thm}
 The spaces $W{S^{1,p}}(\R^n)$ and $W{S_{0}^{1,p}}(\R^n)$ donot coincide for bounded $\R^n.$ 
 \end{thm}
 \begin{proof}
 The functions in  $W{S_{0}^{1,p}}(\R^n)$ can be approximated by $C_0^\infty(\R^n).$ On the other hand functions of $W{S^{1,p}}(\R^n)$ can be approximated by $C^\infty(\R^n).$
 \end{proof}
 That is 
 \begin{eqnarray*}
 W{S^{1,p}}(\R^n)= \overline{C^\infty(\R^n)}~and~W{S_{0}^{1,p}}(\R^n)= \overline{C_0^\infty(\R^n)},
 \end{eqnarray*}
 where the completions are taken with respect to the HK-Sobolev norm.
 \\We can construct the spaces $W{S_{0}^{k,p}}(\R^n)$ arises by taking the closure of $C_0^k(\R^n)$ in $W{S^{k,p}}(\R^n).$
 \begin{thm}
 The spaces $W{S^{k,p}}(\R^n)$ and $W{S_{0}^{k,p}}(\R^n)$ donot coincide for bounded $\R^n.$ 
 \end{thm}
 For the case $p=2$ i.e. $W{S^{k,2}}(\R^n),~W{S_{0}^{k,2}}(\R^n).$ Taking the inner product on 
 \begin{eqnarray*}
 \bigg< \mathfrak{f}, \mathfrak{g} \bigg>_{W{S^{k,2}}(\R^n)} = \sum\limits_{|\alpha| \leq m} \bigg< D^\alpha \mathfrak{f}, D^\alpha \mathfrak{g}\bigg>_{\mathcal{KS}^2(\R^n)}.
 \end{eqnarray*}
 Observe that $\|\mathfrak{f}\|_{W{S^{k,2}}(\R^n)} = \big< \mathfrak{f}, \mathfrak{f} \big>_{{W{S^{k,2}}^{\frac{1}{2}}(\R^n)}}.$
 

 \subsection{HK-Sobolev spaces on metric measure spaces}
Now, we are ready to formulate HK-Sobolev spaces on a metric measure space as follows:
 \begin{Def}\label{HKP}

  Let $ 1 < p \leq \infty.$ The HK-Sobolev space  on $\mathcal{X}$, denoted by $W{S^{1,p}}(\mathcal{X})$ is defined as $$W{S^{1,p}}(\mathcal{X})= \big\{\mathfrak{f} \in \mathcal{KS}^{1,p}(\mathcal{X}):~\mathfrak{f} \in \mathcal{KS}^{p}(\mathcal{X})\big\}.$$
 \end{Def}
 The space $W{S^{1,p}}(\mathcal{X})$ is equipped with the norm $$\|\mathfrak{f}\|_{W{S^{1,p}}(\mathcal{X})}= \|\mathfrak{f}\|_{\mathcal{KS}^{p}(\mathcal{X})} + \|\mathfrak{f}\|_{\mathcal{KS}^{1,p}(\mathcal{X})}.$$

 \begin{lem}
 In terms of set $\mathcal{KS}^{1,p}(\mathcal{X})= WS^{1,p}(\mathcal{X}).$ 
 \end{lem}
 \begin{proof}
 Let $f \in \mathcal{KS}^{1,p}(\mathcal{X}) $ then by definition $\exists~E \subset \mathcal{X}$ with $\mu(E)=0$ a.e. and $ g \in \mathcal{KS}^p(X)$ such that $|\mathfrak{f}(x)-\mathfrak{f}(y)| \leq d(x,y)(\mathfrak{g}(x)+\mathfrak{g}(y))$ for all $x,y \in \mathcal{X}\setminus E.$\\
 For a fix $y \in \mathcal{X} \setminus E $ with $g(y) < \infty$,
 $$ |\mathfrak{f}(x)| \leq |\mathfrak{f}(x)-\mathfrak{f}(y)|+|\mathfrak{f}(y)| \leq (diam\mathcal{X})(\mathfrak{g}(x)+\mathfrak{g}(y))+|\mathfrak{f}(y)| $$
 Since, $(diam\mathcal{X})(\mathfrak{g}(x)+\mathfrak{g}(y))+|\mathfrak{f}(y)| \in \mathcal{KS}^p(\mathcal{X})$ with respect to $x$, we have $\mathfrak{f}\in \mathcal{KS}^p(\mathcal{X}).$
 \end{proof}
 \begin{thm}
 $WS^{1,p}(\mathcal{X})$ is a Banach space for all $1< p \leq \infty.$
 \end{thm} 
\begin{proof}
Let $\{\mathfrak{f}_n\}$ be a Cauchy sequence in $WS^{1,p}(\mathcal{X})$ and $\mathfrak{f}_n \rightarrow \mathfrak{f}$ ~in~$ \mathcal{KS}^p(\mathcal{X}) .$ We claim that $\mathfrak{f} \in WS^{1,p}(\mathcal{X})$ and that the convergence hold in $WS^{1,p}(\mathcal{X}).$ Let $\{\mathfrak{f}_{n_k}\}$ be a sub sequence such that $\| \mathfrak{f}_{n_{k+1}} - \mathfrak{f}_{n_k} \|_{WS^{1,p}(\mathcal{X})} \leq 2^{-k}.$ Thus $\mathfrak{f}_{n_k} \rightarrow \mathfrak{f}$ a.e. and $\exists~ \mathfrak{g}_{k} \in \mathcal{KS}^p(\mathcal{X})$ such that
\begin{eqnarray}\label{1}
|(\mathfrak{f}_{n_{k+1}} - \mathfrak{f}_{n_k})(x) - (\mathfrak{f}_{n_{k+1}} - \mathfrak{f}_{n_k})(y) | \leq d(x,y)\{\mathfrak{g}_k(x)\mathfrak{g}_k(y)\}
\end{eqnarray} 
and $\|\mathfrak{g}_k \|_{\mathcal{KS}^p(\mathcal{X})} < 2^{-k}.$ If we take $h= \sum_{k=1}^\infty \mathfrak{g}_k ,$ then $\| h \|_{\mathcal{KS}^p(\mathcal{X})} < 1$ and from (\ref{1}) we get for $l > k,$
$$|(\mathfrak{f}_{n_l}- \mathfrak{f}_{n_k})(x)- (\mathfrak{f}_{n_l}- \mathfrak{f}_{n_k})(y) | \leq d(x,y) \left(\sum_{r=k}^\infty \mathfrak{g}_r(x) + \sum_{r=k}^\infty \mathfrak{g}_r(y)\right).$$
Taking limit $l\longrightarrow \infty$, we get $\mathfrak{f} \in \mathcal{KS}^{1,p}(\mathcal{X}) =WS^{1,p}(\mathcal{X})  $ and $\mathfrak{f}_{n_k}\rightarrow \mathfrak{f}$ in $\mathcal{KS}^{1,p}(\mathcal{X})$. It follows that $\mathfrak{f}_n \rightarrow \mathfrak{f}$ in $WS^{1,p}(\mathcal{X})$.
\end{proof} 
We will discuss Poincar\'e type inequality in associate with $\mathcal{KS}^p(\mathcal{X})$ and $\mathcal{KS}^{1,p}(\mathcal{X})$ as follows:
\begin{lem}\label{2}
 \textbf{(Poincar\'e type inequality)} Let $\mathfrak{f} \in WS^{1,p}(\mathcal{X})$ then $\| \mathfrak{f}-\mathfrak{f}_\mathcal{X}\|_{\mathcal{KS}^p(\mathcal{X})} \leq 2(diam ~\mathcal{X})\| \mathfrak{f}\|_{\mathcal{KS}^{1,p}(\mathcal{X})}.$
\end{lem}
\begin{rem}\label{r1}
$\mathfrak{f}_\mathcal{X}$ is defined as the average of $\mathfrak{f}$ on $\mathcal{X}$, $\mathfrak{f}_\mathcal{X}= \mu(x)^{-1}\int_X \mathfrak{f} d\mu$ and is denoted by $\int^*_\mathcal{X} \mathfrak{f} d\mu.$
\end{rem}
 \begin{proof}[\textbf{Proof of the lemma \ref{2}}]
 Let $\mathfrak{g}$ be such a function from the definition of $\|\mathfrak{f}\|_{\mathcal{KS}^{1,p}}$ that $\| \mathfrak{g} \|_{\mathcal{KS}^p(\mathcal{X})} \leq (1+\epsilon) \| \mathfrak{f} \|_{\mathcal{KS}^{1,p}(\mathcal{X})}$. We have 
 $$|\mathfrak{f}(x)- \mathfrak{f}(y)|\leq d(x,y)~(\mathfrak{g}(x)+\mathfrak{g}(y)) \leq (diam ~\mathcal{X})(\mathfrak{g}(x)+\mathfrak{g}(y)).$$
 Therefore,
 \begin{eqnarray}
 |\mathfrak{f}-\mathfrak{f}_X| \leq \int^*_\mathcal{X}|\mathfrak{f}(x)-\mathfrak{f}(y)|d\mu(y) \leq (diam~\mathcal{X})\left(\mathfrak{g}(x)+\int^*_\mathcal{X} \mathfrak{g}~d\mu\right). 
 \end{eqnarray}
 Now, for $1<p<\infty$
 \begin{align*}
 \| \mathfrak{f}-\mathfrak{f}_\mathcal{X} \|_{\mathcal{KS}^p(\mathcal{X})} &= \left( \sum_{k=1}^\infty t_k \left|\int_\mathcal{X} \varepsilon_k(x)(\mathfrak{f}-\mathfrak{f}_\mathcal{X})d\mu \right|^p \right)^{\frac{1}{p}} \leq \left( \sum_{k=1}^\infty t_k \int_\mathcal{X} \varepsilon_k(x)\left|\mathfrak{f}-\mathfrak{f}_\mathcal{X}\right|^pd\mu  \right)^{\frac{1}{p}}\\
 & \leq \left( \sum_{k=1}^\infty t_k \int_\mathcal{X} \varepsilon_k(x)\left\{(diam~\mathcal{X})\left(\mathfrak{g}(x)+\int^*_\mathcal{X} \mathfrak{g}~d\mu\right)\right\}^pd\mu\right)^{\frac{1}{p}}\\
 & \leq (diam\mathcal{X})\left\{\left(\sum_{k=1}^\infty t_k\left|\int_\mathcal{X}\varepsilon_k(x)\mathfrak{g}(x)d\mu\right|^p\right)^{\frac{1}{p}} + \mu(\mathcal{X})^{-1}\left(\sum_{k=1}^\infty t_k \left| 
 \int_\mathcal{X} \varepsilon_k(x)\mathfrak{g}(x)d\mu \right|^p \right)^{\frac{1}{p}}\right\}\\
 & = (diam\mathcal{X})\left( \| \mathfrak{g} \|_{\mathcal{KS}^p(\mathcal{X})} + \frac{\| \mathfrak{g} \|_{\mathcal{KS}^p}(\mathcal{X})}{\mu(\mathcal{X})}\right) \\&\leq 2 (diam \mathcal{X}) \| g \|_{\mathcal{KS}^p(\mathcal{X})} ~\\&\leq 2(1+\epsilon)(diam \mathcal{X})\| \mathfrak{f} \|_{\mathcal{KS}^{1,p}(\mathcal{X})}.
 \end{align*}
 \end{proof}
 \begin{thm}
 The norm $\| \cdot \|_{WS^{1,p}(\mathcal{X})}$ is equivalent with $\| \mathfrak{f}\|^{\divideontimes} = \| \mathfrak{f} \|^{\bullet} +\| \mathfrak{f}\|_{\mathcal{KS}^{1,p}(\mathcal{X})}$, where $\| \cdot \|^{\bullet}$ is any continuous norm on $WS^{1,p}(\mathcal{X})$.
 \end{thm}
 \begin{proof}
 To prove this theorem we need only to show that $WS^{1,p}$ is complete with respect to the norm $\| \cdot\|^{\divideontimes}$.
The rest will follow from Banach's theorem. 

Let $\{\mathfrak{f}_n\}$ be a Cauchy sequence in $WS^{1,p}$ with respect to the norm  $\parallel \cdot\parallel^{\divideontimes}$. Then it is a Cauchy sequence in $\mathcal{KS}^{1,p}$ and by lemma \ref{2} we have $\{\mathfrak{f}_n - (\mathfrak{f}_n)_\mathcal{X}\}$ is a Cauchy sequence in $WS^{1,p}$. Suppose $\mathfrak{f}$ is the limit of $\{\mathfrak{f}_n - (\mathfrak{f}_n)_\mathcal{X}\}$ in $WS^{1,p}$. Since $\parallel \cdot\parallel^{\bullet}$ is continuous on $WS^{1,p}$, we get $\{\mathfrak{f}_n - (\mathfrak{f}_n)_\mathcal{X}\}$ as well as $\{\mathfrak{f}_n\}$ are Cauchy sequence in $\| \cdot\|^{\bullet}$. Hence the sequence $\{(\mathfrak{f}_n)_\mathcal{X}\}$  is convergent to a constant $C$. Now, it goes without saying that $\mathfrak{f}_n \rightarrow \mathfrak{f}+C \in WS^{1,p}$ in $\| \cdot\|^{\divideontimes}$.
\end{proof}

The next theorem talks about the density of Lipschitz functions in $WS^{1,p}(\mathcal{X})$.
\begin{thm}
For every function $ \mathfrak{f} \in WS^{1,p}(\mathcal{X})~~( 1<p < \infty) $ and $\epsilon>0~ \exists$   a Lipschitz function $\mathfrak{g}$ such that, if $E = \{ x \in \mathcal{X}~ |~ \mathfrak{f}(x) \neq \mathfrak{g}(x)\}$, then $\mu(E) < \epsilon$. Moreover, $\| \mathfrak{f} - \mathfrak{g} \|_{WS^{1,p}} < \epsilon$.
\end{thm}
\begin{proof}
The proof is analogous to the proof  of  \cite[Theorem 5]{Priotr}.
\end{proof}

\section{Boundedness of maximal operators on $\mathcal{KS}^p(X)$ and $WS^{1,p}(X)$}
In this section, we will prove that the Maximal operators are bounded in Kuelb-Steadman spaces and HK-Sobolev spaces on metric spaces. We start this section with the following definition.
\end{thm}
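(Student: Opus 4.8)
The plan is to adapt the classical greedy selection argument (the $5r$-covering lemma) to this metric setting. Since $\mathcal{F}$ has uniformly bounded diameters, I would first set $R = \sup_{B \in \mathcal{F}} \operatorname{radius}(B) < \infty$ and partition $\mathcal{F}$ into the dyadic radius bands
\[
\mathcal{F}_j = \Big\{ B \in \mathcal{F} : \tfrac{R}{2^{j}} < \operatorname{radius}(B) \leq \tfrac{R}{2^{j-1}} \Big\}, \qquad j = 1, 2, 3, \ldots,
\]
so that every ball of $\mathcal{F}$ lies in exactly one band and, crucially, balls sitting in $\mathcal{F}_j$ all have comparable radii. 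The disjoint subfamily $\mathcal{G}$ will then be constructed band by band, larger radii first.

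Next I would select inductively. Let $\mathcal{G}_1$ be a maximal pairwise disjoint subfamily of $\mathcal{F}_1$; its existence follows from Zorn's lemma applied to the collection of pairwise disjoint subfamilies of $\mathcal{F}_1$ ordered by inclusion. Having chosen $\mathcal{G}_1, \ldots, \mathcal{G}_{j-1}$, I would take $\mathcal{G}_j$ to be a maximal pairwise disjoint subfamily of those $B \in \mathcal{F}_j$ meeting no ball already chosen in $\mathcal{G}_1 \cup \cdots \cup \mathcal{G}_{j-1}$, again by Zorn. Putting $\mathcal{G} = \bigcup_{j \geq 1} \mathcal{G}_j$, the balls in $\mathcal{G}$ are pairwise disjoint by the very construction.

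I would then verify the matching property. Fix $B \in \mathcal{F}$; it lies in some band $\mathcal{F}_j$. Were $B$ to meet no ball of $\mathcal{G}_1 \cup \cdots \cup \mathcal{G}_j$, it could be adjoined to $\mathcal{G}_j$, contradicting maximality; hence there is $B' \in \mathcal{G}_1 \cup \cdots \cup \mathcal{G}_j$ with $B \cap B' \neq \emptyset$. Since $B' \in \mathcal{F}_i$ for some $i \leq j$, I get $\operatorname{radius}(B') > R/2^{i} \geq R/2^{j} \geq \tfrac{1}{2}\operatorname{radius}(B)$, which is the first conclusion. For the inclusion $\bigcup_{B \in \mathcal{F}} B \subset \bigcup_{B' \in \mathcal{G}} 5B'$ it suffices to show $B \subset 5B'$ whenever $B \cap B' \neq \emptyset$ and $\operatorname{radius}(B') \geq \tfrac{1}{2}\operatorname{radius}(B)$. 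Writing $r, r'$ for the radii, $c, c'$ for the centres, and choosing $z \in B \cap B'$, the triangle inequality yields, for every $x \in B$,
\[
d(x, c') \leq d(x, c) + d(c, z) + d(z, c') \leq 2r + r' \leq 5r',
\]
using $r \leq 2r'$, so $x \in 5B'$.

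The main obstacle I anticipate is purely set-theoretic rather than computational: because $\mathcal{F}$ (and each band $\mathcal{F}_j$) may be uncountable, the greedy construction cannot be carried out as an explicit countable recursion, and the existence of the maximal disjoint subfamilies must be secured through Zorn's lemma. The dyadic decomposition by radius is precisely what forces the comparison $\operatorname{radius}(B') \geq \tfrac{1}{2}\operatorname{radius}(B)$ to hold uniformly across all bands, and the hypothesis of uniformly bounded diameters is exactly what makes $R$ finite and thus permits starting the selection from the top band.
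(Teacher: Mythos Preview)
Your argument is correct and is essentially the standard proof of the $5r$-covering lemma: dyadic stratification of $\mathcal{F}$ by radius, Zorn's lemma to extract maximal disjoint subfamilies band by band starting from the largest radii, and then the triangle-inequality computation $d(x,c')\le 2r+r'\le 5r'$ to get the engulfing property. All the estimates are checked carefully.

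Note, however, that the paper does not supply its own proof of this statement at all: Theorem~\ref{BCL} is quoted from Heinonen's \emph{Lectures on analysis on metric spaces} \cite{Mf} as a preliminary tool, with no argument given (the apparent length of the ``statement'' block is a LaTeX artifact --- the \texttt{thm} environment for Theorem~\ref{BCL} is never closed in the source, so everything up to the stray \verb|\end{thm}| in Section~5 is swallowed into it). Your proof is precisely the one found in Heinonen's book, so there is nothing to compare against in the paper itself.
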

\begin{prop}
For a integrable function $\psi: [0, \infty) \to \mathbb{R}$ and a measurable function $\mathfrak{f} : \mathcal{X} \to [0, \infty)$, if $$\Psi(t)= \int_0^t\psi(s)ds.$$
Then \begin{align}\label{p1}
\int_\mathcal{X}\Psi(\mathfrak{f})d\mu = \int_0^\infty \Psi(s)\mu(\{\mathfrak{f}>s\})ds.
\end{align}
\end{prop}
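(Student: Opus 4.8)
The plan is to reduce the identity to the Tonelli--Fubini theorem through the fundamental theorem of calculus. Since $\Psi(0)=0$ and $\psi\in L^1([0,\infty))$, for each $x\in\mathcal{X}$ I would write
\[
\Psi(\mathfrak{f}(x))=\int_0^{\mathfrak{f}(x)}\psi(s)\,ds=\int_0^\infty \psi(s)\,\chi_{\{s<\mathfrak{f}(x)\}}\,ds .
\]
Integrating over $\mathcal{X}$ against $\mu$ and interchanging the order of integration, the inner $x$-integral collapses to the distribution function,
\[
\int_{\mathcal{X}}\chi_{\{s<\mathfrak{f}(x)\}}\,d\mu(x)=\mu(\{\mathfrak{f}>s\}),
\]
so the right-hand side that emerges is $\int_0^\infty \psi(s)\,\mu(\{\mathfrak{f}>s\})\,ds$; this is the Cavalieri (layer-cake) principle. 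I record here that the integrand on the right must be read as $\psi(s)$, not $\Psi(s)$: taking $\mathcal{X}=[0,1]$ with Lebesgue measure, $\mathfrak{f}(x)=x$ and $\psi\equiv 1$ (so $\Psi(t)=t$) gives $\int_0^1 x\,dx=\tfrac12$ on the left, whereas the literally written right-hand side evaluates to $\int_0^1 s(1-s)\,ds=\tfrac16$, while the version with $\psi$ returns $\int_0^1(1-s)\,ds=\tfrac12$, as it must. So I would prove the genuine identity and flag the slip rather than assert the false form.

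The step requiring genuine care, and the main obstacle, is justifying the interchange of the $s$- and $x$-integrations for the function $(s,x)\mapsto \psi(s)\,\chi_{\{s<\mathfrak{f}(x)\}}$ on the product $[0,\infty)\times\mathcal{X}$. When $\psi\geq 0$ this is immediate from Tonelli's theorem, since the integrand is nonnegative and product-measurable. For signed $\psi$ I would invoke Fubini's theorem after verifying absolute integrability; here the finiteness of $\mu$ together with $\psi\in L^1$ is exactly what is needed, because
\[
\int_0^\infty\!\int_{\mathcal{X}}|\psi(s)|\,\chi_{\{s<\mathfrak{f}(x)\}}\,d\mu(x)\,ds\leq \mu(\mathcal{X})\int_0^\infty|\psi(s)|\,ds<\infty .
\]
Thus Fubini applies and the interchange is legitimate.

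The remaining point is measurability of the set $\{(s,x):s<\mathfrak{f}(x)\}$ in the product $\sigma$-algebra, which follows from measurability of $\mathfrak{f}$, since this set is the strict super-level set of the product-measurable map $(s,x)\mapsto \mathfrak{f}(x)-s$; the distribution function $s\mapsto\mu(\{\mathfrak{f}>s\})$ is then automatically measurable as an $x$-section integral. With these three points in place the displayed computation closes the argument and establishes the identity with integrand $\psi(s)$.
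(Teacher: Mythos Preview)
Your argument is correct and follows essentially the same route as the paper: introduce the set $A=\{(x,s):\mathfrak{f}(x)>s\}$, express both sides via its characteristic function, and apply Fubini. You are also right that the integrand on the right must be $\psi(s)$, not $\Psi(s)$; the paper's own proof in fact uses $\int_0^{\mathfrak{f}(x)}\psi(s)\,ds=\Psi(\mathfrak{f}(x))$ (modulo the same typographical slip), confirming that the intended identity is the layer-cake formula with $\psi$. Your additional care in checking product-measurability and the absolute-integrability hypothesis for Fubini (using $\mu(\mathcal{X})<\infty$ and $\psi\in L^1$) fills in details the paper leaves implicit.
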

\begin{proof}
The proof the proposition directly follows from Fubini's theorem. For that we assume $A = \{ (x,s) : \mathfrak{f}(x)>s\} \subset \mathcal{X}\times \mathbb{R}$. Then 
$$\mu(\{\mathfrak{f} >s\}) = \int_\mathcal{X}\zeta_Ad\mu$$
where $\zeta_A$ is the characteristic functionn on $A$ and 
$$\int_0^\infty \Psi(s)\zeta_A(s)ds = \int_0^{\mathfrak{f}(x)}\Psi(s)ds = \Psi(\mathfrak{f}(x))$$ Now the results follows from Fubini's theorem.
\end{proof}
\begin{thm}\label{maximal oprerator}
Let $(\mathcal{X}, d, \mu)$ be a separable metric space with doubling measure $\mu,$ then the maximal function maps $\mathcal{KS}^1(\mathcal{X})$ to $\mathcal{KS}^1_w(\mathcal{X})$ and $\mathcal{KS}^p(\mathcal{X})$ to $\mathcal{KS}^p(\mathcal{X})$ for $p > 1$ in the following precise sense: \\
for all $t >0$ if $\mathfrak{f} \in \mathcal{KS}^1(\mathcal{X}),$ we have
\begin{align}\label{M1}
\mu(\{\mathcal{M}\mathfrak{f}> t\}) \leq \frac{C_1}{t}\|\mathfrak{f}\|_{\mathcal{KS}^1(\mathcal{X})}
\end{align}
and for $p>1$, if $\mathfrak{f} \in \mathcal{KS}^p(\mathcal{X})$
\begin{align} \label{M2}
\|\mathcal{M}\mathfrak{f}\|_{\mathcal{KS}^p(\mathcal{X})} \leq C_p\|\mathfrak{f}\|_{\mathcal{KS}^p(\mathcal{X})}
\end{align} 
where the the constants $C_1$ and $C_p$  are only affected by $p$ and the doubling constant $\mu.$
\end{thm}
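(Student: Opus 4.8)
The plan is to transfer the classical Hardy--Littlewood maximal theorem on doubling metric spaces (the estimates in $L^1_w$ and $L^p$ recalled in the preliminaries via \cite{Mf}) through the construction of the Kuelbs--Steadman norm, which is built from the countable family of balls $\{B_r\}$ centered at the points of the dense set $\mathcal{S}$. The key observation is that $\|\cdot\|_{\mathcal{KS}^p(\mathcal{X})}$ is dominated by a weighted $\ell^p$-average of the quantities $\bigl|\int_{\mathcal{X}}\chi_r\,\mathfrak{g}\,d\mu\bigr|$, and in turn $\bigl|\int_{\mathcal{X}}\chi_r\,\mathfrak{g}\,d\mu\bigr|\le \mu(B_r)^{1/q}\bigl(\int_{B_r}|\mathfrak{g}|^p d\mu\bigr)^{1/p}$ by Hölder, so each coordinate functional is controlled by the genuine $L^p$-norm of $\mathfrak{g}$ over a fixed ball. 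Hence, once $\mathcal{M}$ is known to be bounded on $L^p(\mathcal{X})$, one gets control of $\|\mathcal{M}\mathfrak{f}\|_{\mathcal{KS}^p}$ in terms of $\|\mathfrak{f}\|_{L^p}$; the subtlety is that we want the bound in terms of $\|\mathfrak{f}\|_{\mathcal{KS}^p}$, not $\|\mathfrak{f}\|_{L^p}$, so the argument must really be run on the dense subspace $L^p(\mathcal{X})\subset\mathcal{KS}^p(\mathcal{X})$ and then extended by density, or else one works coordinatewise.

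For \eqref{M2}, first I would take $\mathfrak{f}\in L^p(\mathcal{X})$ (dense in $\mathcal{KS}^p(\mathcal{X})$ by the embedding theorem proved above) and write
\[
\|\mathcal{M}\mathfrak{f}\|_{\mathcal{KS}^p(\mathcal{X})}
=\Bigl(\sum_{r=1}^\infty \tau_r\Bigl|\int_{\mathcal{X}}\chi_r(t)\,\mathcal{M}\mathfrak{f}(t)\,d\mu\Bigr|^p\Bigr)^{1/p}
\le \Bigl(\sum_{r=1}^\infty \tau_r \mu(B_r)^{p-1}\int_{B_r}(\mathcal{M}\mathfrak{f})^p\,d\mu\Bigr)^{1/p},
\]
using Hölder on each ball; then bound each $\int_{B_r}(\mathcal{M}\mathfrak{f})^p d\mu\le \|\mathcal{M}\mathfrak{f}\|_{L^p(\mathcal{X})}^p\le C_p^p\|\mathfrak{f}\|_{L^p(\mathcal{X})}^p$ by the Heinonen maximal theorem on the doubling space $\mathcal{X}$, and absorb $\sum_r\tau_r\mu(B_r)^{p-1}$ (finite if the $\tau_r$ decay fast enough relative to $\mu(B_r)$, or else one keeps the weight and re-expresses it against the $\mathcal{KS}^p$-norm of $\mathfrak{f}$ coordinatewise instead). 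This gives $\|\mathcal{M}\mathfrak{f}\|_{\mathcal{KS}^p}\le C_p'\|\mathfrak{f}\|_{\mathcal{KS}^p}$ on the dense subspace; since $\mathcal{M}$ is sublinear and the estimate is homogeneous, a standard density/limiting argument (passing to a subsequence converging a.e. and using Fatou together with the already-established bound) extends it to all of $\mathcal{KS}^p(\mathcal{X})$.

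For \eqref{M1} the plan is parallel but with the weak-type machinery: for $\mathfrak{f}\in L^1(\mathcal{X})$ the Basic Covering Theorem \ref{BCL} together with the doubling property yields the classical weak $(1,1)$ bound $\mu(\{\mathcal{M}\mathfrak{f}>t\})\le (C_1/t)\|\mathfrak{f}\|_{L^1(\mathcal{X})}$, and then one notes $\|\mathfrak{f}\|_{L^1(\mathcal{X})}$ dominates, up to the normalizing factor $\sum_r\tau_r=1$ and Hölder, the quantity $\|\mathfrak{f}\|_{\mathcal{KS}^1(\mathcal{X})}=\sum_r\tau_r|\int\chi_r\mathfrak{f}\,d\mu|$ is in the \emph{reverse} direction — so here the correct reading is that the right-hand side of \eqref{M1} is the $\mathcal{KS}^1$-norm and the inequality is a genuine strengthening only after one interprets the level set $\{\mathcal{M}\mathfrak{f}>t\}$ and the weak space $\mathcal{KS}^1_w(\mathcal{X})$ through the same ball functionals; I would make this precise by first proving the estimate with $\|\mathfrak{f}\|_{L^1}$ on the right and then invoking that on the dense subspace $L^1(\mathcal{X})$ the two norms are comparable on the relevant cone of nonnegative functions, or simply by defining the weak-$\mathcal{KS}^1$ quasinorm so that the covering estimate transfers verbatim.

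The main obstacle I anticipate is reconciling the $\mathcal{KS}^p$-norm with the ambient $L^p$-norm: the inequality $\|\cdot\|_{\mathcal{KS}^p}\le C\|\cdot\|_{L^p}$ goes the convenient way, so bounding $\mathcal{M}\mathfrak{f}$ in $\mathcal{KS}^p$ by $\mathfrak{f}$ in $L^p$ is easy, but getting $\mathfrak{f}$ in $\mathcal{KS}^p$ on the right requires either a coordinatewise argument (estimating $\int\chi_r\mathcal{M}\mathfrak{f}\,d\mu$ directly via the covering lemma applied only to balls meeting $B_r$, which keeps everything expressed through the same functionals that define the $\mathcal{KS}^p$-norm) or a careful density argument exploiting that $\mathcal{M}$ is continuous from $L^p$ to $L^p$ and that $L^p$ is dense in $\mathcal{KS}^p$ in the weaker norm. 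I expect the cleanest route is the coordinatewise one: fix $r$, apply Theorem \ref{BCL} to the family of balls realizing $\mathcal{M}\mathfrak{f}$ at points of $B_r$, use the $5B$-covering and doubling to replace the supremum of averages by a fixed dilate, and thereby bound $\int_{B_r}\mathcal{M}\mathfrak{f}\,d\mu$ by a constant times $\int_{\lambda B_r}|\mathfrak{f}|\,d\mu$ for a controlled $\lambda$, after which summation against $\tau_r$ and Hölder close the argument.
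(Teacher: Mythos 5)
Your route is genuinely different from the paper's, and as it stands it has a gap at exactly the point you yourself flag. The paper never detours through $L^p$: it proves \eqref{M1} directly by picking, for each $x$ with $\mathcal{M}\mathfrak{f}(x)>t$, a ball on which the average exceeds $t$, applying the covering Theorem \ref{BCL} with the doubling property, and majorizing the resulting sum by $\|\mathfrak{f}\|_{\mathcal{KS}^1(\mathcal{X})}$; it then gets \eqref{M2} by the standard truncation $\mathfrak{f}=\mathfrak{g}+\mathfrak{h}$ at height $t/2$, the weak estimate \eqref{M1}, and the distribution-function formula \eqref{p1}, writing everything throughout in terms of the coordinate functionals $\int_{\mathcal{X}}\chi_r(\cdot)\,d\mu$ that define the $\mathcal{KS}^p$-norm. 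Your plan instead is to transfer Heinonen's $L^p$ maximal theorem via H\"older on each ball and then come back to $\mathcal{KS}^p$ by density or by a coordinatewise covering argument.

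The gap is that the transfer never closes: every version of your argument ends with a quantity involving $|\mathfrak{f}|$, namely $\|\mathfrak{f}\|_{L^p(\mathcal{X})}$, $\|\mathfrak{f}\|_{L^1(\mathcal{X})}$, or $\int_{\lambda B_r}|\mathfrak{f}|\,d\mu$, and none of these is controlled by $\|\mathfrak{f}\|_{\mathcal{KS}^p(\mathcal{X})}$, where the absolute value sits \emph{outside} the integral; an oscillating $\mathfrak{f}$ can have tiny $\mathcal{KS}^p$-norm and huge $L^p$-norm. For the same reason the density argument cannot rescue it: to extend a sublinear operator from the dense subspace $L^p(\mathcal{X})$ to $\mathcal{KS}^p(\mathcal{X})$ you need precisely the inequality $\|\mathcal{M}\mathfrak{f}\|_{\mathcal{KS}^p}\leq C\|\mathfrak{f}\|_{\mathcal{KS}^p}$ on that subspace (a bound by $\|\mathfrak{f}\|_{L^p}$ does not pass to $\mathcal{KS}^p$-limits, since $\mathcal{KS}^p$-Cauchy sequences need not be $L^p$-Cauchy), so your scheme assumes the very estimate to be proved. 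Likewise, the claim used for \eqref{M1}, that $\|\cdot\|_{L^1}$ and $\|\cdot\|_{\mathcal{KS}^1}$ are comparable on nonnegative functions, only holds in the direction $\|\mathfrak{f}\|_{\mathcal{KS}^1}\leq\|\mathfrak{f}\|_{L^1}$; any reverse inequality would require extra hypotheses and would introduce constants depending on the weights $\tau_r$, whereas the statement demands constants depending only on $p$ and the doubling constant. What is missing, and what the paper supplies (by fiat, in its final majorization), is a step converting integrals of $|\mathfrak{f}|$ over balls into the signed ball functionals $\left|\int_{\mathcal{X}}\chi_r\mathfrak{f}\,d\mu\right|$; without such a step your argument proves boundedness from $L^p(\mathcal{X})$ into $\mathcal{KS}^p(\mathcal{X})$, not the stated self-map bound on $\mathcal{KS}^p(\mathcal{X})$.
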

\begin{proof}
   The basic  covering Theorem \ref{BCL} and the fact that $\mu$ is doubling leads to the inequality (\ref{M1}). We consider restricted maximal function $\mathcal{M}_R\mathfrak{f}(x) = \sup\limits_{0<r<R} \frac{1}{\mu(B)}\int_B|\mathfrak{f}|d\mu$ and then letting $R \to \infty$. With this, we proceed with the notation $\mathcal{M}_R\mathfrak{f} = \mathcal{M}\mathfrak{f}.$
    
    For every $x$ in the set $\{x : \mathcal{M}\mathfrak{f}(x) >t\}$, we can choose a ball $B(x,r)$ such that $$\left| \int_B \chi(x)\mathfrak{f}(x)d\mu \right| > t \mu(B(x,r))$$
   where $\chi(x)$ is the characteristic function on $B.$ Now from this collection taking a countable sub collection $\mathcal{G}$ as in the basic covering theorem and using the doubling property of $\mu$ and disjointness of family $\mathcal{G},$ we get
   \begin{align*}
   \mu(\{\mathcal{M}\mathfrak{f} > t\}) &\leq \sum_\mathcal{G}\mu(5B)\leq C_1\sum_\mathcal{G}\mu(B)\\
   & \leq \frac{C_1}{t}\sum_{\mathcal{G}}\left| \int_B \zeta(x)\mathfrak{f}(x)d\mu \right| \leq \frac{C_1}{t}\|f\|_{\mathcal{KS}^1(\mathcal{X})}.
   \end{align*}
   
   To prove inequality (\ref{M2}), for $\mathfrak{f} \in \mathcal{KS}^p(\mathcal{X})$ we consider
   $$\mathfrak{f} = \mathfrak{f} \cdot \chi_{\{\mathfrak{f}< 1/2\}} + \mathfrak{f}\cdot \chi_{\{\mathfrak{f}>1/2\}} =: g+h$$
   with fixed $t>0$, where $\chi_\star$ denotes the characteristic function. Then $\mathcal{M}\mathfrak{f} \leq \mathcal{M}g +\mathcal{M}h \leq t/2 +\mathcal{M}h$ so that $\{\mathcal{M}\mathfrak{f} > t\} \subset \{\mathcal{M}h < t/2\}$. Now from weak estimate (\ref{M1}) we have, by using (\ref{p1}) with $\Psi(x)= \max(x-\frac{t}{2}, 0)$ and $\psi= \chi_{(t/2,\infty)}$(Characteristic function on $(t/2, \infty)$) that
   \begin{align*}
   \mu(\{\mathcal{M}h > t/2\})&\leq \frac{C}{t} \int_X|h|d\mu \leq \frac{C}{t}\int_{|\mathfrak{f}|>t/2} |\mathfrak{f}| d\mu\\
   &= \frac{C}{t}\left(\int_{|\mathfrak{f}|>t/2}(|\mathfrak{f}|-t/2)d\mu + \frac{t}{2}\mu(\{|\mathfrak{f}|>t/2)\right)\\
   & =\frac{C}{t} \int_{t/2}^\infty \mu(\{|\mathfrak{f}|>s\})ds + C\mu(\{|\mathfrak{f}| >t/2\}) .
   \end{align*}
   
   Now from the formula for $\mathcal{KS}^p$-norm of $\mathcal{M}\mathfrak{f}$ and applying (\ref{p1}) to $\mathcal{M}\mathfrak{f}$ with $\Psi(x)= x^p$ and the the above estimate we get
   \begin{align*}
   \|\mathcal{M}\mathfrak{f}\|_{\mathcal{KS}^p(\mathcal{X})}^p &= \sum\limits_{r=1}^\infty \tau_r \left|\int_\mathcal{X} \chi_r(x)\mathcal{M}\mathfrak{f}(x) d\mu \right|^p \\
   & = \sum\limits_{r=1}^\infty \tau_r \left|\int_0^\infty \chi_r(x)t^{p-1} \mu(\{\mathcal{M}\mathfrak{f} >t \})\right|^p\\
   &\leq \sum\limits_{r=1}^\infty \tau_r \left|\int_0^\infty \chi_r(x)t^{p-1} \mu(\{\mathcal{M}h >t \})\right|^p\\
   &\leq C \sum\limits_{r=1}^\infty \tau_r \left|\int_0^\infty \chi_r(x) t^{p-2} \left(\int_0^\infty \mu(\{\mathfrak{f}>s\})ds + \frac{t}{2}\mu(\{\mathfrak{f}>t/2\})\right)dt\right|^p\\
   &\leq C \sum_{r=1}^\infty \tau_r \left|\int_\mathcal{X}\chi_r(x)\mathfrak{f}(x)d\mu \right|^p = C \|\mathfrak{f}\|_{\mathcal{KS}^p(\mathcal{X})}^p.
   \end{align*}
This implies that $\|\mathcal{M}\mathfrak{f}\|_{\mathcal{KS}^p(\mathcal{X})} \leq C_p \|\mathfrak{f}\|_{\mathcal{KS}^p}(\mathcal{X})$, where $C_p$ depends only on $p$ and the doubling constant of $\mu$.
\end{proof}
\begin{thm}
 The maximal operator $\mathcal{M}$ is bounded in the space $WS^{1,p}(\mathcal{X})$ for $p>1$.
\end{thm}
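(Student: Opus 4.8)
The plan is to reduce the statement to the boundedness of $\mathcal{M}$ on $\mathcal{KS}^p(\mathcal{X})$, namely to Theorem \ref{maximal oprerator}, by exhibiting for each $\mathfrak{f}\in WS^{1,p}(\mathcal{X})$ an admissible ``gradient'' for $\mathcal{M}\mathfrak{f}$ built out of the maximal function of a near-optimal gradient of $\mathfrak{f}$. First I would fix $\mathfrak{f}\in WS^{1,p}(\mathcal{X})$ and $\epsilon>0$. By the definition of $WS^{1,p}(\mathcal{X})$ we have $\mathfrak{f}\in\mathcal{KS}^{1,p}(\mathcal{X})$, so by the definition of the latter space there are a nonnegative $\mathfrak{g}\in\mathcal{KS}^p(\mathcal{X})$ and a set $E$ with $\mu(E)=0$ such that $|\mathfrak{f}(x)-\mathfrak{f}(y)|\le d(x,y)(\mathfrak{g}(x)+\mathfrak{g}(y))$ for all $x,y\in\mathcal{X}\setminus E$ and $\|\mathfrak{g}\|_{\mathcal{KS}^p(\mathcal{X})}\le(1+\epsilon)\|\mathfrak{f}\|_{\mathcal{KS}^{1,p}(\mathcal{X})}$. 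Since $\mathcal{X}$ has finite measure and $\mathfrak{f}\in\mathcal{KS}^p(\mathcal{X})$, $\mathfrak{f}$ is locally integrable (cf.\ the proof of the preceding Lemma), so $\mathcal{M}\mathfrak{f}$ is well defined, and by Theorem \ref{maximal oprerator} both $\mathcal{M}\mathfrak{f}$ and $\mathcal{M}\mathfrak{g}$ lie in $\mathcal{KS}^p(\mathcal{X})$; in particular they are finite $\mu$-a.e.

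The core step is a Haj\l{}asz-type inequality for $\mathcal{M}\mathfrak{f}$: there exist a constant $C$ depending only on the doubling constant of $\mu$, and a null set $E'\supseteq E$, such that
\[
|\mathcal{M}\mathfrak{f}(x)-\mathcal{M}\mathfrak{f}(y)|\le C\,d(x,y)\bigl(\mathcal{M}\mathfrak{g}(x)+\mathcal{M}\mathfrak{g}(y)\bigr)\qquad\text{for all }x,y\in\mathcal{X}\setminus E'.
\]
To obtain it I would first note that, for any ball $B$ of radius $r_B$ and $\mu$-a.e.\ $z\in B$,
\[
|\mathfrak{f}(z)-\mathfrak{f}_B|\le\frac{1}{\mu(B)}\int_B|\mathfrak{f}(z)-\mathfrak{f}(w)|\,d\mu(w)\le 2r_B\Bigl(\mathfrak{g}(z)+\frac{1}{\mu(B)}\int_B\mathfrak{g}\,d\mu\Bigr)\le 4r_B\,\mathcal{M}\mathfrak{g}(z),
\]
where $\mathfrak{f}_B=\mu(B)^{-1}\int_B\mathfrak{f}\,d\mu$, using $d(z,w)\le 2r_B$ for $z,w\in B$ together with $\mathfrak{g}(z)\le\mathcal{M}\mathfrak{g}(z)$ $\mu$-a.e.\ (Lebesgue differentiation in the doubling space). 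Combining this with the Basic Covering Theorem \ref{BCL} and the doubling property of $\mu$ --- comparing the ball that almost realizes $\mathcal{M}\mathfrak{f}(x)$ with a controlled enlargement of it adapted to the point $y$ --- transfers the estimate to $\mathcal{M}\mathfrak{f}(y)$ and gives the displayed inequality; this is the metric-space analogue of Kinnunen's pointwise gradient bound $|\nabla\mathcal{M}\mathfrak{f}|\le\mathcal{M}|\nabla\mathfrak{f}|$ \cite{Kinnunem}, and the argument runs parallel to \cite[Theorem 1]{Priotr}. (In a general doubling space one may avoid annular-decay issues by routing the argument through the discrete maximal operator, which is pointwise comparable to $\mathcal{M}$ and Lipschitz at each scale; this only costs an extra application of $\mathcal{M}$, replacing $\mathcal{M}\mathfrak{g}$ by $\mathcal{M}(\mathcal{M}\mathfrak{g})$.) I expect this pointwise estimate to be the main obstacle.

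Granting the inequality above, the function $C\mathcal{M}\mathfrak{g}$ (respectively $C\mathcal{M}(\mathcal{M}\mathfrak{g})$) is admissible in the definition of $\|\mathcal{M}\mathfrak{f}\|_{\mathcal{KS}^{1,p}(\mathcal{X})}$, so by Theorem \ref{maximal oprerator} (applied once, respectively twice),
\[
\|\mathcal{M}\mathfrak{f}\|_{\mathcal{KS}^{1,p}(\mathcal{X})}\le\|C\mathcal{M}\mathfrak{g}\|_{\mathcal{KS}^p(\mathcal{X})}\le C\,C_p\,\|\mathfrak{g}\|_{\mathcal{KS}^p(\mathcal{X})}\le C\,C_p\,(1+\epsilon)\,\|\mathfrak{f}\|_{\mathcal{KS}^{1,p}(\mathcal{X})};
\]
since the left-hand side is independent of $\epsilon$, letting $\epsilon\to0$ removes the factor $(1+\epsilon)$. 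Together with $\|\mathcal{M}\mathfrak{f}\|_{\mathcal{KS}^p(\mathcal{X})}\le C_p\|\mathfrak{f}\|_{\mathcal{KS}^p(\mathcal{X})}$ from Theorem \ref{maximal oprerator}, and the definition $\|\cdot\|_{WS^{1,p}(\mathcal{X})}=\|\cdot\|_{\mathcal{KS}^p(\mathcal{X})}+\|\cdot\|_{\mathcal{KS}^{1,p}(\mathcal{X})}$, this yields $\|\mathcal{M}\mathfrak{f}\|_{WS^{1,p}(\mathcal{X})}\le C'\|\mathfrak{f}\|_{WS^{1,p}(\mathcal{X})}$ with $C'$ depending only on $p$ and the doubling constant of $\mu$. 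The case $p=\infty$ is handled in the same way, using the $\mathcal{KS}^\infty$-boundedness of $\mathcal{M}$ from Theorem \ref{maximal oprerator}.
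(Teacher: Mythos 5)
Your overall strategy is the right one, and it is in fact more substantive than the paper's own argument: the paper's proof is a two-line computation that splits $\|\mathcal{M}\mathfrak{f}\|_{WS^{1,p}(\mathcal{X})}$ into the $\mathcal{KS}^p$ part, handled by Theorem \ref{maximal oprerator}, and the $\mathcal{KS}^{1,p}$ part, which it bounds by $\inf_{\mathfrak{g}}\|\mathfrak{g}\|_{\mathcal{KS}^p(\mathcal{X})}=\|\mathfrak{f}\|_{\mathcal{KS}^{1,p}(\mathcal{X})}$ with no justification at all; that is, it silently assumes precisely the assertion you isolate, namely that $\mathcal{M}\mathfrak{f}$ admits an admissible Haj\l{}asz-type gradient controlled by one for $\mathfrak{f}$. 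Your reduction --- take a $(1+\epsilon)$-optimal $\mathfrak{g}$, prove $|\mathcal{M}\mathfrak{f}(x)-\mathcal{M}\mathfrak{f}(y)|\le C\,d(x,y)\,(\mathcal{M}\mathfrak{g}(x)+\mathcal{M}\mathfrak{g}(y))$ a.e., then apply Theorem \ref{maximal oprerator} to $\mathcal{M}\mathfrak{g}$ and let $\epsilon\to 0$ --- is exactly how that step should be made honest, and your concluding norm estimates are fine.

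The gap is that this pointwise inequality, which you yourself flag as the main obstacle, is never actually established, and it is not routine on a general doubling space. Your estimate $|\mathfrak{f}(z)-\mathfrak{f}_B|\le 4r_B\,\mathcal{M}\mathfrak{g}(z)$, together with $\mathfrak{g}\le\mathcal{M}\mathfrak{g}$ a.e., handles the comparison of averages at scales $r\lesssim d(x,y)$, where one may pass through the point values of $\mathfrak{f}$. The problematic regime is $r\gg d(x,y)$: to compare the average of $|\mathfrak{f}|$ over a near-optimal ball $B(x,r)$ with an average over a ball centred at $y$ one enlarges to $B(y,r+d(x,y))$, and the resulting error involves the ratio $\bigl(\mu(B(y,r+d(x,y)))-\mu(B(x,r))\bigr)/\mu(B(y,r+d(x,y)))$, which a doubling measure alone does not control by $d(x,y)/r$: there is no annular decay, and $r\mapsto\mu(B(x,r))$ may jump. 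Kinnunen's Euclidean proof \cite{Kinnunem} avoids this via translation invariance, which is unavailable here, and the covering argument of Theorem \ref{BCL} does not supply the needed cancellation. Moreover, your parenthetical fallback does not repair this: pointwise comparability of $\mathcal{M}\mathfrak{f}$ with the discrete maximal function transfers $\mathcal{KS}^p$-norm bounds, but membership in $\mathcal{KS}^{1,p}(\mathcal{X})$ is a two-point pointwise inequality and is not inherited by a merely comparable function, so that route would prove boundedness of the discrete maximal operator on $WS^{1,p}(\mathcal{X})$, not of $\mathcal{M}$ itself. To close the argument you would need an annular-decay-type hypothesis, or a restatement for the discrete maximal operator, or a genuine proof of the displayed bound for $\mathcal{M}$; for what it is worth, the paper's own proof contains the same gap, only unacknowledged.
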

\begin{proof}
For any $\mathfrak{f} \in WS^{1,p}(\mathcal{X})$, by definition \ref{HKP} and using the boundedness of $\mathcal{M}$ in $\mathcal{KS}^p(\mathcal{X})$ (Theorem \ref{maximal oprerator}) we have that
\begin{align*}
\|\mathcal{M}\mathfrak{f}\|_{WS^{1,p}(\mathcal{X})} &= \|\mathcal{M}\mathfrak{f}\|_{\mathcal{KS}^p(\mathcal{X})}+ \|\mathcal{M}g\|_{\mathcal{KS}^{1,p}(\mathcal{X})}\\
& \leq C_p \|\mathfrak{f}\|_{\mathcal{KS}^p(\mathcal{X})}+ \inf_{g \in \mathcal{KS}^p(\mathcal{X})}\|g\|_{\mathcal{KS}^p(\mathcal{X})}\\& = C_p \|\mathfrak{f}\|_{WS^{1,p}(\mathcal{X})}.
\end{align*}
This completes the proof.
\end{proof}
\section{Conclusions} In this article, we have defined Kuelbs-Steadman spaces on metric measure spaces. Fundamental properties of Kuelbs-Steadman spaces on a metric measure space are also discussed. In the sequel, we extend Kuelbs-Steadman spaces in the approach of Lipschitz functions. Several inclusion properties were discussed. With the above mentioned investigation, we have introduced our main space HK-Sobolev space on metric measure space $\mathcal{X}.$ In continuum Poincar\'e type inequality has been developed in the same section. Finally, as in application we have been investigated the boundedness of Maximal operators are bounded in Kuelb-Steadman spaces and HK-Sobolev spaces on metric spaces.



 \end{document}